\documentclass[a4paper,12pt]{article}

\usepackage{amsmath,amssymb,amsthm,mathtools}
\usepackage{geometry}
\usepackage{graphicx}
\usepackage{subcaption}
\usepackage{tikz}
\usepackage{hyperref}

\hypersetup{colorlinks=true,linkcolor=blue,citecolor=blue,urlcolor=blue}

\DeclareMathOperator{\gp}{gp}
\newcommand{\stp}{\mathbin{\boxtimes}}

\newtheorem{theorem}{Theorem}[section]
\newtheorem{lemma}[theorem]{Lemma}
\newtheorem{corollary}[theorem]{Corollary}
\newtheorem{proposition}[theorem]{Proposition}
\newtheorem{fact}[theorem]{Fact}

\newtheorem{problem}[theorem]{Problem}

\theoremstyle{definition}

\newtheorem{remark}[theorem]{Remark}

\begin{document}

\title{General position sets in strong products with paths and cycles}
\author{Aleksander Vesel\\
\small Faculty of Natural Sciences and Mathematics, University of Maribor, Slovenia\\
\small Institute of Mathematics, Physics and Mechanics, Ljubljana, Slovenia\\
\texttt{aleksander.vesel@um.si}\\
\small ORCID: 0000-0003-3705-0071}
\date{}
\maketitle

\begin{abstract}
We study general position sets in strong products involving paths and cycles. For every connected graph $H$ and every $s\ge 2$, we prove that
$\gp(P_s\stp H)=2\gp(H)$. We also determine the corresponding values when the path is replaced by $C_4$, $C_5$, or $C_6$, and establish a general upper bound for $\gp(C_s\stp H)$. These results are then applied to strong products of two cycles. We determine several exact values, construct infinite families attaining the general upper bound, and provide counterexamples to the conjectured multiplicativity of the general position number under the strong product.
\end{abstract}

\noindent
\textbf{Keywords:} general position set; general position number; strong product; path; cycle

\noindent
\textbf{AMS Subject Classification (2020):} 05C12, 05C69, 05C76

\section{Introduction}\label{sec:intro}

Let $G=(V(G),E(G))$ be a graph. A set $S\subseteq V(G)$ is a \emph{general position set} if no vertex of $S$ lies on a shortest path between two other vertices of $S$. Equivalently, for every three pairwise distinct vertices $u,v,w\in S$,
\[
d_G(u,v)\neq d_G(u,w)+d_G(w,v),
\]
where $d_G(u,v)$, or simply $d(u,v)$ when the graph is clear, denotes the distance between $u$ and $v$ in $G$. The maximum cardinality of a general position set in $G$ is the \emph{general position number} of $G$, denoted by $\gp(G)$.

General position sets were introduced independently in \cite{Chandran} and \cite{manuel-2018}. Since then, they have attracted considerable attention, particularly in the context of graph operations; see, for example, \cite{mpproduct,ghorbani-2021,polynomial, KlavzarRus,KorzeVesel, Manuel-2018b,tian-2024+,tian-2021a,tian-2021b}. Other notable contributions include \cite{AnaChaChaKlaTho,patkos-2020,yao-2022}. For a comprehensive overview of the theory of general position sets and related developments, we refer the reader to the survey \cite{survey}.

General position sets in strong products were investigated in \cite{KlavzarY}. Among the problems posed there was whether the general position number of the strong product of two connected graphs always equals the product of the general position numbers of the factors. We answer this question in the negative.

In this paper, we study strong products in which at least one factor is a path or a cycle. The remainder of this section introduces the required terminology and recalls the results used later. In Section~\ref{sec:path}, we determine the general position number of the strong product of an arbitrary connected graph with a path. Section~\ref{sec:cycle} concerns strong products with a cycle: we obtain exact values for several small cycles and a general upper bound. In Section~\ref{twocycles}, we focus on strong products of two cycles, determine exact values and bounds for several families, and show that the multiplicativity question from \cite{KlavzarY} has a negative answer.

All graphs considered in this paper are finite, simple, and connected. We use the well-known values
\[
\gp(P_n)=2 \quad (n\ge 2),
\]
and
\[
\gp(C_n)=
\begin{cases}
2, & n=4,\\
3, & n=3\text{ or }n\ge 5.
\end{cases}
\]
Indeed, every three vertices of $C_4$ contain an antipodal pair, and each of the remaining two vertices lies on a shortest path between the antipodal vertices.

Let $G=(V(G),E(G))$ and $H=(V(H),E(H))$. The \emph{strong product} $G\stp H$ has vertex set $V(G)\times V(H)$, and distinct vertices $(u_x,u_y)$ and $(v_x,v_y)$ are adjacent if and only if
\begin{itemize}
\item $u_x=v_x$ and $u_yv_y\in E(H)$, or
\item $u_y=v_y$ and $u_xv_x\in E(G)$, or
\item $u_xv_x\in E(G)$ and $u_yv_y\in E(H)$.
\end{itemize}
For $u=(u_x,u_y)\in V(G\stp H)$, the vertices $u_x$ and $u_y$ are called the \emph{$x$-projection} and the \emph{$y$-projection} of $u$, respectively.

For $p\in V(H)$, the subgraph induced by $V(G)\times\{p\}$ is a copy of $G$, denoted by $G^p$. Similarly, for $q\in V(G)$, the subgraph induced by $\{q\}\times V(H)$ is a copy of $H$.

A subgraph $H'$ of a graph $G$ is \emph{isometric} if
$d_{H'}(u,v)=d_G(u,v)$ for all $u,v\in V(H')$.

For a positive integer $k$, let
\[
[k]=\{1,2,\ldots,k\},
\qquad
[k]_0=\{0,1,\ldots,k-1\}.
\]
For $s\ge 3$ and $n,k\in[s]_0$, define the cyclic distance
\[
|n-k|_s=\min\{|n-k|,s-|n-k|\}.
\]
We identify $V(P_s)$ with $[s]_0$ for $s\ge 2$ and $V(C_s)$ with $[s]_0$ for $s\ge 3$. Thus, a vertex of $C_s\stp C_t$ is written as $u=(u_x,u_y)$, where $u_x\in[s]_0$ and $u_y\in[t]_0$.

The following result is well known; see, for example, \cite{imkl-00}.

\begin{fact}\label{distances}
If $u=(u_x,u_y)$ and $v=(v_x,v_y)$ are vertices of $G\stp H$, then
\[
d_{G\stp H}(u,v)=\max\{d_G(u_x,v_x),d_H(u_y,v_y)\}.
\]
\end{fact}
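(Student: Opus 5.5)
We must show $d_{G\stp H}(u,v)=\max\{d_G(u_x,v_x),d_H(u_y,v_y)\}$; call the right-hand side $m$, with $a=d_G(u_x,v_x)$ and $b=d_H(u_y,v_y)$. The plan is to prove the two inequalities separately. The lower bound comes from projecting paths in the product onto the factors. The upper bound comes from an explicit walk in the product built by running shortest paths in the two factors in parallel.

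For the lower bound, take any $u,v$-path $u=w_0,w_1,\dots,w_\ell=v$ in $G\stp H$. By the definition of adjacency in the strong product, for each $i$ the $x$-projections of $w_i$ and $w_{i+1}$ are equal or adjacent in $G$. Likewise, their $y$-projections are equal or adjacent in $H$. Deleting consecutive repetitions from the sequence of $x$-projections gives a $u_x,v_x$-walk in $G$ of length at most $\ell$, so $a\le \ell$. The same argument on the $y$-projections gives $b\le\ell$. Applying this to a shortest path yields $d_{G\stp H}(u,v)\ge m$.

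For the upper bound, assume without loss of generality that $a\le b$, so $m=b$. Choose a shortest path $u_x=g_0,g_1,\dots,g_a=v_x$ in $G$ and a shortest path $u_y=h_0,h_1,\dots,h_b=v_y$ in $H$. Extend the first sequence by setting $g_i=g_a$ for $a<i\le b$, and consider the vertices $w_i=(g_i,h_i)$ for $i=0,\dots,b$. For $i<a$, the pairs $g_ig_{i+1}$ and $h_ih_{i+1}$ are both edges, so $w_iw_{i+1}$ is an edge by the third adjacency rule. For $i\ge a$, we have $g_i=g_{i+1}$ and $h_ih_{i+1}\in E(H)$, so $w_iw_{i+1}$ is an edge by the first rule. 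Hence $w_0,\dots,w_b$ is a $u,v$-walk of length $b=m$, and $d_{G\stp H}(u,v)\le m$.

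The only point needing care is the degenerate case where a coordinate stays fixed. This is exactly why the strong product includes the two "Cartesian" adjacency rules alongside the diagonal one. The case $u=v$ gives $m=0$ and is trivial. Both factors are assumed connected, so $a$ and $b$ are finite, and $G\stp H$ is connected as well. No step is expected to be a real obstacle; the synchronisation of the two shortest paths is the key idea.
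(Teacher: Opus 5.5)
Your proof is correct: projecting a shortest $u,v$-path onto each factor gives $d_{G\stp H}(u,v)\ge\max\{a,b\}$, and running shortest paths in $G$ and $H$ in parallel, with the shorter one held fixed once it ends, gives the matching upper bound. The paper does not prove this fact; it cites it as well known from the product-graph literature, where the standard proof is exactly your argument, and your only slip is cosmetic: the second index range should read $a\le i<b$ rather than $i\ge a$.
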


For a vertex $u$ of $G$, let $N_G(u)$ denote its open neighborhood and let
$N_G[u]=N_G(u)\cup\{u\}$.

The \emph{interval} $I_G(u,v)$ between vertices $u$ and $v$ of $G$ is the set of all vertices that lie on a shortest $u,v$-path. When the graph is clear, we write $I(u,v)$. 

Thus, a set $S\subseteq V(G)$ is in general position if and only if
$z\notin I_G(u,v)$ for every three pairwise distinct vertices $u,v,z\in S$.

Let $S$ be a general position set of $G\stp H$, and let $A\subseteq S$. We call $A$ an \emph{$x$-isometric set} if
\[
d_G(u_x,v_x)\ge d_H(u_y,v_y)
\]
for all $u,v\in A$. Analogously, $A$ is a \emph{$y$-isometric set} if
\[
d_H(u_y,v_y)\ge d_G(u_x,v_x)
\]
for all $u,v\in A$.

\begin{proposition}\label{interval}
Let $G$ and $H$ be graphs, and let $u,v,z\in V(G\stp H)$. Suppose that
$d_G(u_x,v_x)\ge d_H(u_y,v_y)$. Then $z\in I_{G\stp H}(u,v)$ if and only if
\begin{enumerate}
\item $d_G(u_x,z_x)\ge d_H(u_y,z_y)$;
\item $d_G(z_x,v_x)\ge d_H(z_y,v_y)$;
\item $z_x\in I_G(u_x,v_x)$.
\end{enumerate}
\end{proposition}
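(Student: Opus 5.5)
We prove both directions using Fact~\ref{distances} together with the characterization $z\in I_{G\stp H}(u,v)$ if and only if $d(u,z)+d(z,v)=d(u,v)$. Write $a=d_G(u_x,v_x)$, $b=d_H(u_y,v_y)$, $a_1=d_G(u_x,z_x)$, $b_1=d_H(u_y,z_y)$, $a_2=d_G(z_x,v_x)$, $b_2=d_H(z_y,v_y)$. By Fact~\ref{distances} and the hypothesis $a\ge b$, we have $d(u,v)=a$. Hence $z\in I_{G\stp H}(u,v)$ is equivalent to
\[
\max\{a_1,b_1\}+\max\{a_2,b_2\}=a .
\]

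For the ``if'' direction, conditions 1 and 2 give $\max\{a_1,b_1\}=a_1$ and $\max\{a_2,b_2\}=a_2$. Condition 3 gives $a_1+a_2=a$. Therefore the displayed sum equals $a=d(u,v)$, and so $z\in I_{G\stp H}(u,v)$.

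For the ``only if'' direction, assume the displayed equality holds. We chain inequalities:
\[
a\le a_1+a_2\le \max\{a_1,b_1\}+\max\{a_2,b_2\}=a .
\]
The first inequality is the triangle inequality in $G$, and the second is trivial. Equality must therefore hold throughout. The first equality gives $a_1+a_2=a$, which is exactly $z_x\in I_G(u_x,v_x)$, i.e.\ condition 3. The second equality, combined with $a_i\le\max\{a_i,b_i\}$ for $i=1,2$, forces $\max\{a_1,b_1\}=a_1$ and $\max\{a_2,b_2\}=a_2$. That is, $a_1\ge b_1$ and $a_2\ge b_2$, which are conditions 1 and 2.

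The only delicate point is noticing that the hypothesis $a\ge b$ is used solely to identify $d(u,v)$ with the $G$-distance. After that, the sandwich argument forces all three conditions at once. No triangle inequality in $H$ is needed, and no further case analysis arises.
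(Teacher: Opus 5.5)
Your proposal is correct and follows essentially the same route as the paper. Both arguments use Fact~\ref{distances} and the hypothesis to identify $d_{G\stp H}(u,v)$ with $d_G(u_x,v_x)$, prove the forward direction by the same sandwich of inequalities forced to equality, and prove the converse by direct substitution.
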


\begin{proof}
Suppose first that $z\in I_{G\stp H}(u,v)$. By Fact~\ref{distances} and the hypothesis,
\begin{align*}
d_G(u_x,v_x)
&=d_{G\stp H}(u,v)\\
&=d_{G\stp H}(u,z)+d_{G\stp H}(z,v)\\
&\ge d_G(u_x,z_x)+d_G(z_x,v_x)\\
&\ge d_G(u_x,v_x).
\end{align*}
Hence equality holds throughout. In particular,
\[
d_G(u_x,v_x)=d_G(u_x,z_x)+d_G(z_x,v_x),
\]
so $z_x\in I_G(u_x,v_x)$. Moreover,
\[
d_{G\stp H}(u,z)=d_G(u_x,z_x)
\quad\text{and}\quad
d_{G\stp H}(z,v)=d_G(z_x,v_x),
\]
which, by Fact~\ref{distances}, yields conditions (1) and (2).

Conversely, suppose that conditions (1)--(3) hold. Condition (3) gives
\[
d_G(u_x,v_x)=d_G(u_x,z_x)+d_G(z_x,v_x).
\]
By conditions (1) and (2) and Fact~\ref{distances},
\[
d_{G\stp H}(u,z)=d_G(u_x,z_x)
\quad\text{and}\quad
d_{G\stp H}(z,v)=d_G(z_x,v_x).
\]
Therefore,
\begin{align*}
d_{G\stp H}(u,z)+d_{G\stp H}(z,v)
&=d_G(u_x,v_x)\\
&=d_{G\stp H}(u,v),
\end{align*}
and hence $z\in I_{G\stp H}(u,v)$.
\end{proof}

Klav\v{z}ar and Yero \cite{KlavzarY} established several exact results for strong products. In particular, they proved the following general lower bound.
\begin{theorem}[{\cite[Theorem~4.2]{KlavzarY}}]\label{th:lower}
If $G$ and $H$ are graphs, then
\[
\gp(G\stp H)\ge \gp(G)\gp(H).
\]
\end{theorem}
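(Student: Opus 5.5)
This is Theorem 4.2 of Klavžar and Yero, quoted from \cite{KlavzarY}; the plan is to show that the Cartesian product of two general position sets stays in general position in $G\stp H$. Let $S_G$ be a $\gp$-set of $G$ and $S_H$ a $\gp$-set of $H$, and put $S=S_G\times S_H$. Then $|S|=\gp(G)\gp(H)$, so it suffices to show that no vertex of $S$ lies in $I_{G\stp H}(u,v)$ for two other vertices $u,v\in S$.

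Suppose, for a contradiction, that $u,v,z\in S$ are pairwise distinct and $z\in I_{G\stp H}(u,v)$. By symmetry between the factors we may assume $d_G(u_x,v_x)\ge d_H(u_y,v_y)$. Proposition~\ref{interval} then gives conditions (1)--(3), and in particular $z_x\in I_G(u_x,v_x)$. We split into cases according to coincidences among $u_x,v_x,z_x$.

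\emph{Case A: $u_x,v_x,z_x$ are pairwise distinct.} These three vertices lie in $S_G$, so $z_x\in I_G(u_x,v_x)$ contradicts $S_G$ being in general position.

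\emph{Case B: $u_x=v_x$.} Then $0=d_G(u_x,v_x)\ge d_H(u_y,v_y)$, so $u=v$, which is impossible.

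\emph{Case C: $z_x=u_x$ (the case $z_x=v_x$ is symmetric).} Condition (1) reads $0\ge d_H(u_y,z_y)$, so $z_y=u_y$ and hence $z=u$, again impossible.

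All cases end in a contradiction, so $S$ is a general position set and the bound follows. The only delicate step is making sure the degenerate coincidences of projections are excluded. Proposition~\ref{interval} handles exactly this: conditions (1) and (2) force a coordinate coincidence to propagate to the whole vertex. So no real obstacle remains.
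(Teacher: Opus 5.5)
Your proof is correct: the three cases exhaust all coincidences among $u_x,v_x,z_x$. Case~A is where general position of $S_G$ (or of $S_H$, after the symmetric reduction) is used, and conditions (1) and (2) of Proposition~\ref{interval} rule out the degenerate coincidences. The paper does not reprove this result but only quotes it from \cite[Theorem~4.2]{KlavzarY}; your argument, showing that $S_G\times S_H$ is a general position set of $G\stp H$, is the standard route to it, and the paper's interval characterization makes it clean.
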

They also asked whether equality always holds.
\begin{problem}[{\cite[Problem~4.8]{KlavzarY}}]\label{conjecture}
Is
\[
\gp(G\stp H)=\gp(G)\gp(H)
\]
true for all connected graphs $G$ and $H$?
\end{problem}

\begin{fact}\label{fact:iso-bound}
Let $S$ be a general position set of $G\stp H$ and let $A\subseteq S$.
If $A$ is $y$-isometric, then $|A|\le \gp(H)$. Analogously, if $A$ is
$x$-isometric, then $|A|\le \gp(G)$.
\end{fact}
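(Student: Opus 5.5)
The plan is to prove the $y$-isometric case and obtain the $x$-isometric case by swapping the roles of $G$ and $H$. Let $A\subseteq S$ be $y$-isometric, so that $d_H(u_y,v_y)\ge d_G(u_x,v_x)$ for all $u,v\in A$. The idea is to project $A$ onto its $y$-coordinates, show that the projection is injective, and show that the image $A_y=\{u_y: u\in A\}$ is a general position set of $H$. Then $|A|=|A_y|\le \gp(H)$.

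\textbf{Injectivity.} Take distinct $u,v\in A$ with $u_y=v_y$. Then $0=d_H(u_y,v_y)\ge d_G(u_x,v_x)$, which forces $u_x=v_x$ and hence $u=v$, a contradiction. So the projection $u\mapsto u_y$ is injective on $A$, and $|A_y|=|A|$.

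\textbf{General position of $A_y$.} Suppose, for a contradiction, that there are pairwise distinct $u,v,z\in A$ with $z_y\in I_H(u_y,v_y)$. We apply the version of Proposition~\ref{interval} with the coordinates interchanged; it follows from the same argument by symmetry of Fact~\ref{distances}. Since $A$ is $y$-isometric, its hypothesis $d_H(u_y,v_y)\ge d_G(u_x,v_x)$ holds. Its conditions (1) and (2) become $d_H(u_y,z_y)\ge d_G(u_x,z_x)$ and $d_H(z_y,v_y)\ge d_G(z_x,v_x)$, and both hold because the pairs $\{u,z\}$ and $\{z,v\}$ lie in $A$. Condition (3) is exactly our assumption $z_y\in I_H(u_y,v_y)$. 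The proposition therefore gives $z\in I_{G\stp H}(u,v)$, which contradicts $S$ being a general position set. Hence $A_y$ is in general position in $H$, and $|A|\le\gp(H)$.

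\textbf{Main obstacle.} The only delicate point is invoking Proposition~\ref{interval} with $G$ and $H$ swapped. This is justified because $G\stp H\cong H\stp G$ via $(a,b)\mapsto(b,a)$, an isomorphism that preserves intervals. The rest is direct bookkeeping.
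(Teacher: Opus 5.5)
Your proof is correct and follows the paper's route: you show the $y$-projection is injective on $A$ and that its image is a general position set of $H$. The only difference is the second step. The paper gets it directly from Fact~\ref{distances}, noting that $d_{G\stp H}(a,b)=d_H(a_y,b_y)$ for all $a,b\in A$, so the non-equality $d(u,v)\neq d(u,z)+d(z,v)$ transfers verbatim to $H$. You instead go through the coordinate-swapped Proposition~\ref{interval}, which works but is slightly less direct.
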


\begin{proof}
Assume that $A$ is $y$-isometric. Distinct vertices of $A$ have distinct
$y$-projections: if $u\ne v$ and $u_y=v_y$, then
\[
0=d_H(u_y,v_y)\ge d_G(u_x,v_x),
\]
which implies $u=v$, a contradiction. Let $u,v,z\in A$ be pairwise distinct.
Because $A$ is contained in a general position set,
\[
d_{G\stp H}(u,v)\ne d_{G\stp H}(u,z)+d_{G\stp H}(z,v).
\]
By Fact~\ref{distances} and the $y$-isometric property,
$d_{G\stp H}(a,b)=d_H(a_y,b_y)$ for all $a,b\in A$. Consequently,
\[
d_H(u_y,v_y)\ne d_H(u_y,z_y)+d_H(z_y,v_y).
\]
Thus, the $y$-projections of the vertices of $A$ form a general position set in
$H$, and hence $|A|\le \gp(H)$. The $x$-isometric case is analogous.
\end{proof}

For $u,v\in V(G\stp H)$, we use the notation
\[
d_x(u,v)=d_G(u_x,v_x),
\qquad
d_y(u,v)=d_H(u_y,v_y).
\]
When the first factor is $P_s$, we have $d_x(u,v)=|u_x-v_x|$, whereas for
$C_s$ we have $d_x(u,v)=|u_x-v_x|_s$.

Let $H$ be a connected graph, let $G\in\{P_s,C_s\}$, and let $S$ be a general
position set of $G\stp H$. Define the graph $X=X(S)$ by
$V(X)=S$ and
\[
uv\in E(X)
%\quad\Longleftrightarrow\quad
\text{ if and only if }
d_x(u,v)>d_y(u,v).
\]
Thus, pairs satisfying $d_x(u,v)=d_y(u,v)$ are nonadjacent in $X$. 

A set
$A\subseteq S$ is independent in $X$ if and only if it is $y$-isometric. By
Fact~\ref{fact:iso-bound}, every such set has cardinality at most $\gp(H)$.

\begin{fact}\label{fact:chromatic}
Let $H$ be a connected graph, let $G\in\{P_s,C_s\}$, and let $S$ be a general
position set of $G\stp H$. Then
\[
|S|\le \chi(X(S))\gp(H).
\]
\end{fact}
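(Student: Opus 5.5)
The plan is to use a proper coloring of $X=X(S)$ to split $S$ into independent sets, and then bound each color class by Fact~\ref{fact:iso-bound}.

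Let $k=\chi(X(S))$ and fix a proper coloring $c\colon S\to[k]$ of $X$. The color classes $A_1,\dots,A_k$, where $A_i=c^{-1}(i)$, partition $S$. Each $A_i$ is independent in $X$, so no two vertices $u,v\in A_i$ satisfy $d_x(u,v)>d_y(u,v)$. Equivalently, $d_H(u_y,v_y)\ge d_G(u_x,v_x)$ for all $u,v\in A_i$, so $A_i$ is a $y$-isometric subset of the general position set $S$.

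By Fact~\ref{fact:iso-bound}, each class then satisfies $|A_i|\le\gp(H)$. Summing over the partition gives
\[
|S|=\sum_{i=1}^{k}|A_i|\le k\,\gp(H)=\chi(X(S))\gp(H).
\]

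Two details need checking. First, the equivalence between independence in $X$ and being $y$-isometric must also hold for pairs with $d_x=d_y$; it does, because such pairs are nonadjacent in $X$ and still satisfy the $y$-isometric inequality. Second, if $S$ is empty the bound is trivial, since $\chi=0$ and $|S|=0$. There is no real obstacle: all the work is already contained in Fact~\ref{fact:iso-bound}, and this argument only organizes $S$ into pieces to which that fact applies.
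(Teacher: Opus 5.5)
Your proof is correct and takes the same approach as the paper: a proper $\chi(X(S))$-coloring partitions $S$ into classes that are independent in $X$, hence $y$-isometric, and Fact~\ref{fact:iso-bound} bounds each class by $\gp(H)$. Your remarks on pairs with $d_x=d_y$ and on the empty case are harmless sanity checks that the paper leaves implicit.
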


\begin{proof}
A proper coloring of $X(S)$ partitions $S$ into $\chi(X(S))$ independent sets.
Each color class has cardinality at most $\gp(H)$ by
Fact~\ref{fact:iso-bound}, and the result follows.
\end{proof}

If $uv\in E(X)$, we also write $u\sim_X v$.

\section{Strong products of a graph and a path}\label{sec:path}

Throughout this section let $S$ be a general position set of $P_s \stp H$,
where $s\ge 2$ and $H$ is a connected graph. 
\begin{lemma}\label{lem:bipartite}
The graph $X(S)$ is bipartite.
\end{lemma}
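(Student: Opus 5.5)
The plan is to exhibit an explicit proper $2$-coloring of $X(S)$. The coloring comes from a local observation: every vertex of $X(S)$ has all of its $X$-neighbors strictly on one side of it with respect to the $x$-coordinate in $P_s$.

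\emph{Step 1 (neighbors have different $x$-coordinates).} If $u\sim_X v$, then $d_x(u,v)>d_y(u,v)\ge 0$, so $u_x\ne v_x$. Since $d_x(u,v)=|u_x-v_x|$ and $d_x(u,v)>d_y(u,v)$, Fact~\ref{distances} gives $d_{P_s\stp H}(u,v)=|u_x-v_x|$.

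\emph{Step 2 (the key claim).} We show that no vertex $v\in S$ has $X$-neighbors $u,w$ with $u_x<v_x<w_x$. Suppose such $u,w$ exist. By Step~1,
\[
d(u,v)+d(v,w)=(v_x-u_x)+(w_x-v_x)=w_x-u_x .
\]
By Fact~\ref{distances}, $d(u,w)\ge d_x(u,w)=w_x-u_x$. The triangle inequality gives the reverse bound, so $d(u,w)=d(u,v)+d(v,w)$. Hence $v\in I(u,w)$, which contradicts the assumption that $S$ is in general position. Equivalently, one can invoke Proposition~\ref{interval}, since conditions (1)--(3) hold for $z=v$. This uses that in a path every $x$-coordinate between $u_x$ and $w_x$ lies in $I_{P_s}(u_x,w_x)$. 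That fact fails for cycles, which is why the argument is specific to $P_s$.

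\emph{Step 3 (the coloring).} By Steps~1 and~2, every non-isolated vertex $v$ of $X(S)$ has either all its neighbors with smaller $x$-coordinate or all with larger $x$-coordinate. Color $v$ \emph{left} in the first case and \emph{right} in the second. Isolated vertices may receive either color. Now take an edge $uv$ of $X(S)$ and, by Step~1, assume $u_x<v_x$. Then $v$ has a neighbor to its left, so $v$ is left, and $u$ has a neighbor to its right, so $u$ is right. Thus the coloring is proper and $X(S)$ is bipartite.

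The only real content is Step~2, and it reduces to the single distance computation above, so I expect no serious obstacle.
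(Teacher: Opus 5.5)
Your proof is correct, but it takes a different route from the paper. The paper argues by contradiction on a shortest odd cycle $w_0w_1\cdots w_{2k}w_0$ of $X(S)$. It excludes triangles separately, since they would be $x$-isometric sets of size $3>\gp(P_s)$ (Fact~\ref{fact:iso-bound}). For an induced odd cycle of length at least $5$, it combines the \emph{non}-adjacency of $w_i$ and $w_{i+2}$, i.e.\ $|s_i+s_{i+1}|\le d_y(w_i,w_{i+2})$ with $s_i=(w_{i+1})_x-(w_i)_x$, with the triangle inequality in $H$. This forces consecutive steps $s_i,s_{i+1}$ to have opposite signs, which is impossible around an odd cycle.

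You instead prove a local statement about \emph{every} pair of $X$-neighbors of a vertex, adjacent or not, directly from general position: a vertex with $X$-neighbors on both sides lies on a geodesic between them. From this you read off an explicit left/right bipartition. Your version is shorter and handles triangles with the same argument. It needs neither the minimal-cycle reduction nor Fact~\ref{fact:iso-bound}, and it exhibits the bipartition concretely.

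The paper's version shows something extra: its sign-alternation step uses no general position at all. For an arbitrary $S\subseteq V(P_s\stp H)$, the graph $X(S)$ is bipartite as soon as it is triangle-free, and general position is needed only to exclude triangles.
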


\begin{proof}
Suppose that $X=X(S)$ contains an odd cycle, and choose one of minimum length.
This cycle is induced. It cannot be a triangle, because the vertices of a
triangle in $X$ form an $x$-isometric set, whose cardinality is at most
$\gp(P_s)=2$ by Fact~\ref{fact:iso-bound}.

Hence, $X$ contains an induced odd cycle
\[
w_0w_1\cdots w_{2k}w_0,
\qquad k\ge 2.
\]
Write $p_i=(w_i)_x$, let
$\delta_{i,j}=d_y(w_i,w_j)$, and set
$s_i=p_{i+1}-p_i$, where indices are taken modulo $2k+1$. Since
$w_iw_{i+1}\in E(X)$ and $w_iw_{i+2}\notin E(X)$, we have
\[
|s_i|\ge \delta_{i,i+1}+1
\quad\text{and}\quad
|s_i+s_{i+1}|\le \delta_{i,i+2}.
\]
If $s_i$ and $s_{i+1}$ had the same sign, then the triangle inequality in $H$
would yield
\begin{align*}
|s_i+s_{i+1}|
&=|s_i|+|s_{i+1}|\\
&\ge \delta_{i,i+1}+\delta_{i+1,i+2}+2\\
&>\delta_{i,i+2}\\
&\ge |s_i+s_{i+1}|,
\end{align*}
a contradiction. Therefore, consecutive steps alternate in sign. This is
impossible around a cycle with an odd number of vertices.
\end{proof}

\begin{theorem}\label{thm:path}
If $s\ge 2$ and $H$ is a connected graph, then
\[
\gp(P_s\stp H)=2\gp(H).
\]
\end{theorem}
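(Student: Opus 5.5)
The equality splits into two inequalities, and both follow almost directly from the results already established. For the lower bound I will apply Theorem~\ref{th:lower} with $G=P_s$. Since $\gp(P_s)=2$ for $s\ge 2$, it gives
\[
\gp(P_s\stp H)\ge \gp(P_s)\gp(H)=2\gp(H).
\]

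For the upper bound, I take an arbitrary general position set $S$ of $P_s\stp H$ and look at its graph $X=X(S)$. By Lemma~\ref{lem:bipartite}, $X$ is bipartite, so $\chi(X)\le 2$. If $X$ has no edges, then $\chi(X)=1$, which is still at most $2$. Fact~\ref{fact:chromatic} then yields $|S|\le \chi(X)\gp(H)\le 2\gp(H)$. Taking $S$ to be a maximum general position set gives $\gp(P_s\stp H)\le 2\gp(H)$, and the two inequalities together prove the theorem.

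The only real obstacle is bipartiteness of $X(S)$, and Lemma~\ref{lem:bipartite} already settles it. In that lemma, triangles are excluded by Fact~\ref{fact:iso-bound} because $\gp(P_s)=2$, and longer odd cycles are excluded by the sign-alternation argument. Beyond that, I would only check that Fact~\ref{fact:chromatic} applies with $G=P_s$, which its hypothesis $G\in\{P_s,C_s\}$ allows.
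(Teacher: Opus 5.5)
Your proof is correct and matches the paper's argument: the lower bound comes from Theorem~\ref{th:lower} with $\gp(P_s)=2$, and the upper bound comes from the bipartiteness of $X(S)$ (Lemma~\ref{lem:bipartite}) combined with Fact~\ref{fact:chromatic}.
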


\begin{proof}
Theorem~\ref{th:lower} gives
\[
\gp(P_s\stp H)\ge \gp(P_s)\gp(H)=2\gp(H).
\]
Conversely, let $S$ be a general position set of $P_s\stp H$. By
Lemma~\ref{lem:bipartite}, $X(S)$ is bipartite, and hence
$\chi(X(S))\le 2$. Fact~\ref{fact:chromatic} now gives
$|S|\le 2\gp(H)$.
\end{proof}

\section{Strong products of a graph and a cycle}\label{sec:cycle}

\begin{theorem}\label{thm:averaging}
Let $H$ be a connected graph. If $s\ge 3$ and $h=\lfloor s/2\rfloor$, then
\[
\gp(C_s\stp H)\le
\left\lfloor\frac{2s\,\gp(H)}{h+1}\right\rfloor.
\]
\end{theorem}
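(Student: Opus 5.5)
The plan is to average over the isometric ``path windows'' of the cycle, applying Theorem~\ref{thm:path} to each window. For $i\in[s]_0$, let $W_i=\{i,i+1,\ldots,i+h\}$ (indices modulo $s$) be the set of $h+1$ consecutive vertices of $C_s$ starting at $i$, and let $S_i=\{u\in S: u_x\in W_i\}$, where $S$ is an arbitrary general position set of $C_s\stp H$. The goal is to show $|S_i|\le 2\gp(H)$ for every $i$ and then double count.

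\emph{Step 1: each window is an isometric path.} Any two vertices of $W_i$ are at cyclic distance equal to their distance along the arc. Indeed, their arc distance is $|a-b|\le h=\lfloor s/2\rfloor$, so $|a-b|\le s-|a-b|$. Hence the subgraph of $C_s$ induced by $W_i$ is an isometric copy of $P_{h+1}$. By Fact~\ref{distances}, the subgraph of $C_s\stp H$ induced by $W_i\times V(H)$ is then an isometric copy of $P_{h+1}\stp H$: distances there are $\max\{d_x,d_y\}$ with the same $d_x$ values as in the whole product.

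\emph{Step 2: bound each window.} In an isometric subgraph all distances agree with those in the ambient graph. So the condition $d(u,v)\ne d(u,z)+d(z,v)$ transfers directly, and $S_i$ is a general position set of $P_{h+1}\stp H$. Since $s\ge 3$, we have $h+1\ge 2$, and Theorem~\ref{thm:path} gives $|S_i|\le 2\gp(H)$.

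\emph{Step 3: double count.} Each vertex $x\in[s]_0$ lies in exactly $h+1$ of the windows $W_i$, namely those with $i\in\{x-h,\ldots,x\}$. These $h+1$ indices are distinct modulo $s$ because $h+1\le s$. Summing over $i$ gives
\[
(h+1)|S|=\sum_{i\in[s]_0}|S_i|\le 2s\,\gp(H).
\]
Dividing by $h+1$ and using that $|S|$ is an integer yields the stated floor bound.

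I do not expect a real obstacle. The only points needing care are the isometry of the window when $s$ is even, where the endpoints are at distance exactly $s/2$ and this is still fine, and the verification that general position is inherited by isometric subgraphs. Both are immediate from Fact~\ref{distances}.
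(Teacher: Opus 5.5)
Your proposal is correct and follows essentially the same argument as the paper's proof. The paper also uses the $s$ cyclic windows of $h+1$ consecutive vertices, notes that each window times $V(H)$ is an isometric copy of $P_{h+1}\stp H$, applies Theorem~\ref{thm:path} to each window, and double counts using the fact that every vertex lies in exactly $h+1$ windows.
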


\begin{proof}
For $i\in[s]_0$, let
\[
A_i=\{i,i+1,\ldots,i+h\},
\]
where the entries are taken modulo $s$. The subgraph of $C_s$ induced by $A_i$
is an isometric path on $h+1$ vertices. Hence, by Fact~\ref{distances}, the
subgraph induced by $A_i\times V(H)$ is an isometric copy of
$P_{h+1}\stp H$ in $C_s\stp H$.

Let $S$ be a general position set of $C_s\stp H$ and set
$S_i=S\cap(A_i\times V(H))$. Since $A_i\times V(H)$ induces an isometric
subgraph, $S_i$ is a general position set of $P_{h+1}\stp H$. By
Theorem~\ref{thm:path},
\[
|S_i|\le 2\gp(H).
\]
Every vertex of $S$ belongs to exactly $h+1$ of the sets
$A_i\times V(H)$. Therefore,
\[
|S|(h+1)=\sum_{i=0}^{s-1}|S_i|\le 2s\gp(H).
\]
Since $|S|$ is an integer, the asserted bound follows.
\end{proof}

Theorem~\ref{thm:averaging} will be applied to strong products of two cycles in Section~\ref{twocycles}. When one factor is a $3$-cycle, we use the following result.

\begin{proposition}[{\cite[Proposition~4.3]{KlavzarY}}]\label{prop:complete}
For every $m\ge 1$ and every connected graph $H$,
\[
\gp(K_m\stp H)=m\gp(H).
\]
\end{proposition}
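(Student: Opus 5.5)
The lower bound follows from Theorem~\ref{th:lower}, because $\gp(K_m)=m$: no vertex of $K_m$ lies on a shortest path between two others, since all distances equal $1$. So the work is in the upper bound $\gp(K_m\stp H)\le m\gp(H)$.

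For the upper bound, let $S$ be a general position set of $K_m\stp H$. Partition $S$ according to the $x$-projection, setting
\[
S^q=S\cap(\{q\}\times V(H)),\qquad q\in V(K_m).
\]
This gives $m$ classes. I claim each $S^q$ is $y$-isometric. For $u,v\in S^q$ we have $d_{K_m}(u_x,v_x)=0\le d_H(u_y,v_y)$, so the $y$-isometric condition holds trivially. Fact~\ref{fact:iso-bound} then gives $|S^q|\le\gp(H)$. The one thing to confirm is that Fact~\ref{fact:iso-bound} applies here: its proof uses only Fact~\ref{distances} and the general position property, not any assumption on the first factor. Summing over the $m$ classes gives $|S|\le m\gp(H)$.

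Alternatively, the statement fits the chromatic framework of Fact~\ref{fact:chromatic}. In $K_m\stp H$, two vertices are adjacent in the analogue of $X(S)$ only if $d_x(u,v)=1>d_y(u,v)$, which forces $u_y=v_y$. Coloring each vertex by its $x$-projection then gives a proper coloring with at most $m$ colors.

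I do not expect a real obstacle. The only point needing care is that pairs with the same $x$-projection are automatically $y$-isometric, and that is immediate. Collecting $S$ by $x$-projection is enough, so no odd-cycle or averaging argument like those of Lemma~\ref{lem:bipartite} or Theorem~\ref{thm:averaging} is needed.
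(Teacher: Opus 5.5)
Your proof is correct, but it uses the decomposition dual to the paper's. You split $S$ along the $H$-fibers $\{q\}\times V(H)$. Each fiber is an isometric copy of $H$, so each $S^q$ is trivially $y$-isometric and has at most $\gp(H)$ vertices, and there are $m$ fibers.

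The paper instead splits $S$ along the layers $K_m^p$ and picks one representative from each occupied layer. It uses that $K_m$ has diameter $1$, so $d_x\le 1\le d_y$ between representatives. Hence the representatives are $y$-isometric, at most $\gp(H)$ layers are occupied, and each occupied layer holds at most $m$ vertices.

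Your route is more elementary and more general. It never uses the structure of $K_m$ and in fact proves $\gp(G\stp H)\le |V(G)|\,\gp(H)$ for every graph $G$. That bound is tight for $K_m$ precisely because $\gp(K_m)=|V(K_m)|=m$.

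The paper's route genuinely exploits diameter one and yields extra structural information: $S$ occupies at most $\gp(H)$ of the layers $K_m^p$. Indeed, the $y$-projections of $S$ form a general position set of $H$.

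Your chromatic reformulation is the same partition in different language. Coloring by $x$-projection makes the color classes exactly the sets $S^q$.
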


\begin{remark}
The concepts introduced above also yield a short proof of
Proposition~\ref{prop:complete}. The lower bound follows from
Theorem~\ref{th:lower}. For the upper bound, let $S$ be a general position set
of $K_m\stp H$, and choose a set $\widehat S\subseteq S$ containing one vertex
from each nonempty layer $K_m^p$ met by $S$. Distinct vertices
$u,v\in\widehat S$ lie in distinct layers, and therefore
$d_y(u,v)\ge 1$, while $d_x(u,v)\le 1$. Thus, $\widehat S$ is
$y$-isometric, and Fact~\ref{fact:iso-bound} gives
$|\widehat S|\le \gp(H)$. Hence, $S$ meets at most $\gp(H)$ layers, each of
which contains at most $m$ vertices, so $|S|\le m\gp(H)$.
\end{remark}

Let $H$ be a connected graph, let $s\ge 4$, let $S$ be a general position set
of $C_s\stp H$, and set $X=X(S)$. If $uv\in E(X)$ and $d_x(u,v)=1$, then
$u$ and $v$ are called \emph{close neighbors}, and $uv$ is called a
\emph{short edge} of $X$. Vertices $u,v\in V(X)$ are \emph{adjacent true twins}
if $N_X[u]=N_X[v]$.

\begin{lemma}\label{lem:short}
Let $H$ be a connected graph, let $s\ge 4$, let $S$ be a general position set
of $C_s\stp H$, and let $X=X(S)$. If $uv$ is a short edge of $X$, then
$u_y=v_y$, and $u$ and $v$ are adjacent true twins of $X$. Moreover, the
short edges form a matching, and no edge of $X$ joins endpoints of two distinct
short edges.
\end{lemma}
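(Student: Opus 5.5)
The plan is to derive everything from Fact~\ref{distances}, the general position condition, and Fact~\ref{fact:iso-bound}, proving the four claims in the order stated.

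\emph{Step 1: $u_y=v_y$.} If $uv$ is a short edge, then $1=d_x(u,v)>d_y(u,v)\ge 0$. Hence $d_y(u,v)=0$, that is, $u_y=v_y$.

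\emph{Step 2: twins.} Let $w\in S\setminus\{u,v\}$, and suppose for a contradiction that $w\sim_X u$ but $w\not\sim_X v$. Since $u_y=v_y$, we have $d_y(w,u)=d_y(w,v)=:\delta$. Since $d_x(u,v)=1$, the triangle inequality in $C_s$ gives $|d_x(w,u)-d_x(w,v)|\le 1$. From $d_x(w,u)\ge\delta+1$ and $d_x(w,v)\le\delta$ we therefore get $d_x(w,u)=\delta+1$ and $d_x(w,v)=\delta$. By Fact~\ref{distances},
\[
d(w,u)=\delta+1,\qquad d(w,v)=\delta,\qquad d(v,u)=1 .
\]
Thus $d(w,u)=d(w,v)+d(v,u)$, so $v\in I(w,u)$, contradicting general position. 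By symmetry the case $w\sim_X v$, $w\not\sim_X u$ is excluded as well. Since $u\sim_X v$, this gives $N_X[u]=N_X[v]$. This is the only step with real content; the key observation is that equal $y$-projections force the distances from $w$ to $u$ and to $v$ to be exactly $\delta+1$ and $\delta$.

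\emph{Step 3: matching.} Suppose $u$ has two distinct close neighbors $v,v'$. By Step 1, $v_y=v'_y=u_y$. Both $v_x$ and $v'_x$ are neighbors of $u_x$ in $C_s$, and they are distinct, since otherwise $v=v'$. So $v_x$ and $v'_x$ are the two neighbors $u_x\pm 1$. Because $s\ge 4$, we have $d_x(v,v')=2$, while all three vertices share the same $y$-projection. Hence $d(v,v')=2=d(v,u)+d(u,v')$ and $u\in I(v,v')$, a contradiction.

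\emph{Step 4: no edges between distinct short edges.} Let $uv$ and $u'v'$ be distinct short edges; by Step 3 they are disjoint. Suppose some $X$-edge joins them, say $u\sim_X u'$ after relabeling. By Step 2, $u'$ and $v'$ are true twins, so $u\sim_X v'$. Likewise $u$ and $v$ are true twins, so $v\sim_X u'$ and $v\sim_X v'$. Hence $\{u,v,u',v'\}$ is a clique in $X$. Every pair in it satisfies $d_x>d_y$, so the set is $x$-isometric. Fact~\ref{fact:iso-bound} then gives $4\le \gp(C_s)\le 3$, a contradiction.
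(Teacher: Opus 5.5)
Your proof is correct and follows the paper's argument essentially step for step. The twin property comes from forcing equality in the triangle inequality (you swap the roles of $u$ and $v$ relative to the paper), the matching property from the $1,1,2$ distance pattern, and the final claim from the $K_4$ being an $x$-isometric set, which contradicts Fact~\ref{fact:iso-bound}; you also make explicit details the paper leaves implicit, such as why the two close neighbors sit at $u_x\pm1$ with $x$-distance $2$ when $s\ge 4$, and why the four endpoints are distinct.
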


\begin{proof}
Since $d_x(u,v)=1>d_y(u,v)$, we have $d_y(u,v)=0$, and hence $u_y=v_y$.
Let $w\sim_X v$, where $w\notin\{u,v\}$, and suppose that
$w\not\sim_X u$. Put $q=d_x(v,w)$. Then
$d_y(v,w)\le q-1$, and because $u_y=v_y$,
$d_y(u,w)=d_y(v,w)$. The triangle inequality in $C_s$ gives
\[
q-1\le d_x(u,w)\le d_y(u,w)\le q-1.
\]
Thus, equality holds throughout. By Fact~\ref{distances},
\[
d(u,w)=q-1,
\qquad d(u,v)=1,
\qquad d(v,w)=q.
\]
Consequently, $u\in I(v,w)$, contradicting the fact that $S$ is in general
position. Interchanging $u$ and $v$ shows that $N_X[u]=N_X[v]$.

If two short edges shared a vertex, their three endpoints would have the same
$y$-projection and consecutive $x$-projections. Their pairwise product
distances would therefore be $1,1,2$, contradicting general position. Hence,
the short edges form a matching.

Finally, suppose that an edge joins endpoints of two distinct short edges.
Since the endpoints of each short edge are adjacent true twins, the four
endpoints induce a $K_4$ in $X$. They therefore form an $x$-isometric set of
cardinality $4$, contradicting Fact~\ref{fact:iso-bound} and
$\gp(C_s)\le 3$.
\end{proof}

\begin{theorem}\label{thm:rows456}
For every connected graph $H$,
\begin{align*}
\gp(C_4\stp H)&=2\gp(H),\\
\gp(C_5\stp H)&=\gp(C_6\stp H)=3\gp(H).
\end{align*}
\end{theorem}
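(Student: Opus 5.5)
The lower bounds come directly from Theorem~\ref{th:lower}, since $\gp(C_4)=2$ and $\gp(C_5)=\gp(C_6)=3$. For $C_6$ the upper bound is immediate from Theorem~\ref{thm:averaging}: with $s=6$ and $h=3$ it gives $\gp(C_6\stp H)\le \lfloor 12\gp(H)/4\rfloor=3\gp(H)$. For $C_4$ and $C_5$ the averaging bound is too weak. In these cases I will instead use Fact~\ref{fact:chromatic} and show that $\chi(X(S))\le 2$ for $s=4$ and $\chi(X(S))\le 3$ for $s=5$, for every general position set $S$.

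The common tool is Lemma~\ref{lem:short}. In both $C_4$ and $C_5$ every distance is at most $2$. So every edge of $X=X(S)$ is either short ($d_x=1$, $d_y=0$) or \emph{long} ($d_x=2$). Let $S'$ be obtained from $S$ by deleting one endpoint $v$ of each short edge $uv$. By Lemma~\ref{lem:short}, the short edges form a matching and no edge joins endpoints of distinct short edges. Hence $X[S']$ contains only long edges. I will colour $X[S']$ through its $x$-projections. For each deleted $v$ with partner $u$, I will then use $N_X[u]=N_X[v]$: every neighbour $w\ne u$ of $v$ is a long neighbour of both $u$ and $v$. It must therefore satisfy $d_x(w,u)=d_x(w,v)=2$ while $d_x(u,v)=1$.

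For $s=4$ this last condition is impossible in $C_4$, because the vertex at distance $2$ from $u_x$ is unique and differs from the one at distance $2$ from $v_x$. So each short edge is an isolated edge of $X$, and it can be $2$-coloured on its own. The long edges join $x$-projections $0$–$2$ or $1$–$3$. Colouring a vertex by whether its $x$-projection lies in $\{0,1\}$ or $\{2,3\}$ is therefore a proper $2$-colouring of $X[S']$. Hence $\chi(X)\le 2$ and $|S|\le 2\gp(H)$.

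For $s=5$, long edges join $x$-projections at distance $2$ in $C_5$. The map $w\mapsto w_x$ is thus a homomorphism from $X[S']$ into the graph on $[5]_0$ with $i\sim i\pm 2$, which is again a $5$-cycle. Composing with a proper $3$-colouring $c$ of this $5$-cycle gives a proper $3$-colouring of $X[S']$ in which vertices with equal $x$-projection get equal colours. Now restore a deleted $v$ with partner $u$. Its neighbours other than $u$ all have the same $x$-projection, namely the unique vertex of $C_5$ at distance $2$ from both $u_x$ and $v_x$, so they share a single colour. Together with the colour of $u$, at most two colours are forbidden, and the third colour is free for $v$. Distinct deleted vertices are nonadjacent by Lemma~\ref{lem:short}, so they can be coloured independently. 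Thus $\chi(X)\le 3$, and Fact~\ref{fact:chromatic} gives $|S|\le 3\gp(H)$.

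The main point needing care is the structure of the short edges: that, after removing one endpoint of each, only long edges remain, and that a deleted vertex sees at most two colours. Both follow from the twin and matching statements of Lemma~\ref{lem:short}, together with the rigid geometry of distance $2$ in $C_4$ and $C_5$.
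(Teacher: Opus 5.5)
Your proof is correct. For $C_4$ and $C_5$ it is essentially the paper's argument: you bound $\chi(X(S))$ using Lemma~\ref{lem:short} and colour by $x$-projections. For $s=5$ the paper deletes both endpoints of every short edge and colours by the classes $\{0,1\},\{2,3\},\{4\}$, which is exactly a proper $3$-colouring of your distance-$2$ pentagram, so your one-endpoint deletion is an inessential variant.

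The genuine difference is $C_6$. The paper stays inside the chromatic framework. It shows that every pair of close neighbours is an isolated component of $X$, since a common external neighbour would have $x$-distances $(1,2)$ or $(2,3)$ and would create a geodesic triple. It then colours by the classes $\{0,1\},\{2,3\},\{4,5\}$. You instead apply Theorem~\ref{thm:averaging} with $s=6$, $h=3$, which gives $\lfloor 12\gp(H)/4\rfloor=3\gp(H)$ at once.

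Your route is shorter and legitimately available, because Theorem~\ref{thm:averaging} depends only on Theorem~\ref{thm:path}. It works only because $2s/(h+1)$ happens to equal the integer $3=\gp(C_6)$, which fails for $s=4,5$, as you note. The paper's route costs a few more lines but yields the stronger structural fact $\chi(X(S))\le 3$ for every general position set of $C_6\stp H$, and it treats all three cycles uniformly.
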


\begin{proof}
The lower bounds follow from Theorem~\ref{th:lower}, together with
$\gp(C_4)=2$ and $\gp(C_5)=\gp(C_6)=3$. For the upper bounds, let $S$ be a
general position set and let $X=X(S)$. By Fact~\ref{fact:chromatic}, it is
enough to prove that $\chi(X)\le 2$ when $s=4$ and that $\chi(X)\le 3$ when
$s\in\{5,6\}$.

First suppose that $s\in\{4,6\}$. We claim that every pair of close neighbors
forms an isolated component of $X$. Let $u$ and $v$ be close neighbors and
suppose that $w$ is a common external neighbor, it is adjacent to both $u$ and
$v$ by Lemma~\ref{lem:short}. Up to order, the distances in $C_s$ from $w_x$
to $u_x$ and $v_x$ are $(1,2)$ when $s=4$, and either $(1,2)$ or $(2,3)$ when
$s=6$. If the smaller distance is $1$, the corresponding edge is short.
Lemma~\ref{lem:short} then implies that $u$, $v$, and $w$ have the same
$y$-projection, and their product distances are $1,1,2$, a contradiction. If
the distances are $(2,3)$, then
$d_y(w,u)=d_y(w,v)\le 1$, and the product distances are $1,2,3$, again a
contradiction. Thus, no such external neighbor exists.

For $s=4$, every non-short edge has $x$-distance $2$. The two position classes
$\{0,1\}$ and $\{2,3\}$ therefore give a proper $2$-coloring of all non-short
edges, while each isolated short edge can be colored with the two colors.
Hence, $\chi(X)\le 2$.

For $s=6$, every non-short edge has $x$-distance $2$ or $3$. The three classes
$\{0,1\}$, $\{2,3\}$, and $\{4,5\}$ separate every such pair. Each isolated
short edge can be colored with any two of the three colors, and therefore
$\chi(X)\le 3$.

It remains to consider $s=5$. As in the $(1,2)$ case above, every external
neighbor of a pair of close neighbors has $x$-distance $2$ from both
endpoints. In $C_5$, there is a unique position with this property. Hence, all
external neighbors of a fixed pair have the same $x$-projection and are
pairwise nonadjacent in $X$. Remove the endpoints of all short edges. Every
remaining edge has $x$-distance $2$, so the position classes
$\{0,1\}$, $\{2,3\}$, and $\{4\}$ give a proper $3$-coloring of the remaining
vertices. Now restore the endpoints of the short edges. By
Lemma~\ref{lem:short}, distinct short edges are nonadjacent, while the external
neighbors of each short edge all have the same color. The two endpoints can
therefore be assigned the other two colors. Thus, $\chi(X)\le 3$.
\end{proof}

\section{Strong products of two cycles}\label{twocycles}
The following result is an immediate consequence of
Theorem~\ref{thm:averaging}.

\begin{proposition}\label{prop:sup}
Let $s,t\ge 7$. Then
\[
\gp(C_s\stp C_t)\le
\begin{cases}
9, & 8\in\{s,t\},\\
10, & 8\notin\{s,t\}\text{ and }
      \{s,t\}\cap\{7,9,10,12,14,16,18,20\}\ne\varnothing,\\
11, & \text{otherwise}.
\end{cases}
\]
\end{proposition}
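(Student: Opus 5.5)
The plan is to apply Theorem~\ref{thm:averaging} with $H=C_t$ and also with the roles of the factors interchanged. Since $C_s\stp C_t\cong C_t\stp C_s$ and $\gp(C_t)=\gp(C_s)=3$ for $s,t\ge 7$, this gives
\[
\gp(C_s\stp C_t)\le \min\{f(s),f(t)\},
\qquad
f(n)=\left\lfloor\frac{6n}{\lfloor n/2\rfloor+1}\right\rfloor .
\]
Everything then reduces to computing $f(n)$ for $n\ge 7$. I would split by parity and write $n$ as $2m$ or $2m+1$.

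For even $n=2m$ we have $\lfloor n/2\rfloor=m$, so
\[
f(2m)=\left\lfloor 12-\frac{12}{m+1}\right\rfloor .
\]
\begin{itemize}
\item $m=4$ ($n=8$): $f=\lfloor 9.6\rfloor=9$.
\item $5\le m\le 10$ ($n=10,12,\dots,20$): $12/(m+1)\in(1,2]$, so $f=10$.
\item $m\ge 11$ ($n\ge 22$): $12/(m+1)\in(0,1]$, so $f=11$.
\end{itemize}

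For odd $n=2m+1$ we again have $\lfloor n/2\rfloor=m$, so
\[
f(2m+1)=\left\lfloor 12-\frac{6}{m+1}\right\rfloor .
\]
\begin{itemize}
\item $m\in\{3,4\}$ ($n=7,9$): $6/(m+1)\in(1,2)$, so $f=10$.
\item $m\ge 5$ ($n\ge 11$): $6/(m+1)\in(0,1]$, so $f=11$.
\end{itemize}

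Hence, for $n\ge 7$, $f(n)=9$ exactly when $n=8$, $f(n)=10$ exactly when $n\in\{7,9,10,12,14,16,18,20\}$, and $f(n)=11$ otherwise. Taking the minimum over $n\in\{s,t\}$ yields the three cases in the statement. In particular, the second case needs the hypothesis $8\notin\{s,t\}$ only so that the smaller bound $9$ does not take over.

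There is no real obstacle, since Theorem~\ref{thm:averaging} does all the work. The only care needed is at the boundary values $n=20,21,22$ and $n=9,11$, where $12/(m+1)$ or $6/(m+1)$ equals or crosses $1$ and the floor changes. I would check these values directly.
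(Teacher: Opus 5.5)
Your proposal is correct and follows essentially the same route as the paper: apply Theorem~\ref{thm:averaging} in both factor orders with $\gp(C_s)=\gp(C_t)=3$, then write $6n/(\lfloor n/2\rfloor+1)$ as $12-12/(m+1)$ or $12-6/(m+1)$ according to parity. From these you read off where the floor equals $9$, $10$, or $11$, and your computations at the boundary values $n=9,11,20,21,22$ are accurate.
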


\begin{proof}
By Theorem~\ref{thm:averaging},
\[
\gp(C_s\stp C_t)
\le
\min\left\{
\left\lfloor\frac{6s}{\lfloor s/2\rfloor+1}\right\rfloor,
\left\lfloor\frac{6t}{\lfloor t/2\rfloor+1}\right\rfloor
\right\}.
\]
For $r=2m+1$, the expression before taking the floor is
$12-6/(m+1)$, while for $r=2m$ it is $12-12/(m+1)$. For $r\ge 7$, the
corresponding floor equals $9$ when $r=8$, equals $10$ when
$r\in\{7,9,10,12,14,16,18,20\}$, and equals $11$ otherwise. Taking the
minimum proves the result.
\end{proof}

Proposition~\ref{prop:sup}, together with Theorems~\ref{th:lower} and~\ref{thm:rows456}, determines the general position number whenever one of the factors is a cycle of length~$8$. \begin{corollary}\label{cor:row8} 
If $t\ge 3$ and $t\ne 4$, then \[ \gp(C_8\stp C_t)=9. \] 
\end{corollary}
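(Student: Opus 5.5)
We prove matching upper and lower bounds of $9$. For the upper bound, apply Theorem~\ref{thm:averaging} with the cycle $C_8$ as the first factor and $H=C_t$. Here $s=8$ and $h=4$, and $\gp(C_t)=3$ for $t=3$ or $t\ge 5$ (this is exactly where $t\ne 4$ is needed). This gives
\[
\gp(C_8\stp C_t)\le \left\lfloor \frac{2\cdot 8\cdot 3}{5}\right\rfloor=\lfloor 9.6\rfloor = 9 .
\]
The bound holds for every $t\ge 3$ with $t\ne 4$, not only for $t\ge 7$ as in Proposition~\ref{prop:sup}, because Theorem~\ref{thm:averaging} needs only that $H=C_t$ is connected.

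For the lower bound, Theorem~\ref{th:lower} gives $\gp(C_8\stp C_t)\ge \gp(C_8)\gp(C_t)=3\cdot 3=9$, since $\gp(C_8)=3$ and $\gp(C_t)=3$. The two bounds coincide, which proves the corollary.

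For small $t$ the product order can also be reversed. For $t=3$, Proposition~\ref{prop:complete} with $C_3=K_3$ gives $\gp(C_8\stp C_3)=3\gp(C_8)=9$. For $t\in\{5,6\}$, Theorem~\ref{thm:rows456} with $H=C_8$ gives $3\gp(C_8)=9$, using the commutativity of the strong product. These serve only as consistency checks, since the general argument already covers these cases.

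No step is a real obstacle. The only points to watch are that $\gp(C_t)=3$ requires $t\ne 4$, and that the floor is evaluated correctly: $48/5=9.6$ rounds down to $9$. For $t=4$ the value is different: Theorem~\ref{thm:rows456} gives $2\gp(C_8)=6$, which explains why $t=4$ is excluded.
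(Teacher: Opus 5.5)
Your proof is correct and is essentially the paper's argument: the upper bound $9$ comes from Theorem~\ref{thm:averaging}, which the paper invokes through Proposition~\ref{prop:sup} for $t\ge 7$ and supplements with the small-cycle results for $t\in\{3,5,6\}$, and the lower bound comes from Theorem~\ref{th:lower}. Applying Theorem~\ref{thm:averaging} directly with $H=C_t$ is a small streamlining, since it covers every $t\ge 3$ with $t\ne 4$ in one step and avoids the case split, including the case $t=3$, which the paper's cited results do not explicitly address.
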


Theorem~\ref{thm:rows456} and Proposition~\ref{prop:complete} determine the
answer whenever at least one factor is a cycle of length at most $6$.

\begin{proposition}\label{cor:upper}
Let $s,t\ge 3$ and suppose that $\min\{s,t\}\le 6$. Then
\[
\gp(C_s\stp C_t)=
\begin{cases}
4, & s=t=4,\\
6, & \text{exactly one of }s,t\text{ equals }4,\\
9, & 4\notin\{s,t\}.
\end{cases}
\]
\end{proposition}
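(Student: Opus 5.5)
By symmetry of the strong product we may assume $s=\min\{s,t\}\le 6$, so $s\in\{3,4,5,6\}$. The plan is to compute $\gp(C_s\stp C_t)$ by applying, with $H=C_t$, either Proposition~\ref{prop:complete} (when $s=3$) or Theorem~\ref{thm:rows456} (when $s\in\{4,5,6\}$). Each of these results gives an exact formula $\gp(C_s)\gp(H)$ for an arbitrary connected $H$, so nothing beyond the known values of $\gp(C_n)$ is required.

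First, let $s=3$. Since $C_3=K_3$, Proposition~\ref{prop:complete} gives $\gp(C_3\stp C_t)=3\gp(C_t)$. This equals $6$ if $t=4$. If $t=3$ or $t\ge 5$, it equals $9$.

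Next, let $s=4$. Theorem~\ref{thm:rows456} gives $\gp(C_4\stp C_t)=2\gp(C_t)$. This equals $4$ if $t=4$ and $6$ otherwise.

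Finally, let $s\in\{5,6\}$. Theorem~\ref{thm:rows456} gives $3\gp(C_t)$. Since $t\ge s$, we have $t\ge 5$, so $\gp(C_t)=3$ and the value is $9$.

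It remains to match these computations with the three cases of the statement. If $s=t=4$, we obtained $4$. If exactly one of $s,t$ equals $4$, then either $s=3$ and $t=4$, or $s=4$ and $t\ne 4$; both give $6$. If $4\notin\{s,t\}$, every subcase gives $9$. There is no genuine obstacle: the only point requiring care is the reduction to $s=\min\{s,t\}$, which is justified by the isomorphism $C_s\stp C_t\cong C_t\stp C_s$.
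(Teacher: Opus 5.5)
Your proposal is correct and follows essentially the same route as the paper: you reduce by commutativity to $s=\min\{s,t\}$, then apply Proposition~\ref{prop:complete} for $C_3=K_3$ and Theorem~\ref{thm:rows456} for $s\in\{4,5,6\}$ with $H=C_t$, and read off the values of $\gp(C_t)$. Your explicit matching of subcases to the three cases of the statement simply spells out what the paper's two-sentence proof leaves implicit.
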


\begin{proof}
If one factor is $C_3=K_3$, the result follows from
Proposition~\ref{prop:complete}. If one factor is $C_4$, $C_5$, or $C_6$, it
follows from Theorem~\ref{thm:rows456}, together with the known values of the
general position number of a cycle.
\end{proof}

Upper bounds can be refined by using the next lemma in some cases.
\begin{lemma}\label{lem:perfect}
Let $s,t\ge 3$ with $s\ne 4$ and $t\ne 4$. Let $S$ be a general position set
of $C_s\stp C_t$, and let $X=X(S)$. Then $\omega(X)\le 3$ and
$\alpha(X)\le 3$. Moreover, if $X$ is perfect, then $|S|\le 9$.
\end{lemma}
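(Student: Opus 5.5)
The plan is to derive both bounds from Fact~\ref{fact:iso-bound}, and then combine them using the perfect graph property.

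\textbf{Clique bound.} A clique in $X$ is a set of vertices of $S$ that are pairwise adjacent in $X$. For such vertices $d_x(u,v)>d_y(u,v)$, so in particular $d_x(u,v)\ge d_y(u,v)$. Hence every clique is an $x$-isometric subset of the general position set $S$. By Fact~\ref{fact:iso-bound} its cardinality is at most $\gp(C_s)$. Since $s\ne 4$, we have $\gp(C_s)=3$ (this also covers $s=3$), so $\omega(X)\le 3$.

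\textbf{Independence bound.} As noted after the definition of $X(S)$, an independent set of $X$ is exactly a $y$-isometric subset of $S$. Fact~\ref{fact:iso-bound} then gives $\alpha(X)\le\gp(C_t)=3$, because $t\ne 4$.

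\textbf{The perfect case.} Assume $X$ is perfect. There are two ways to finish, and I would use the first.
\begin{itemize}
\item By Lov\'asz's theorem, every perfect graph satisfies $|V(X)|\le \alpha(X)\,\omega(X)$. Since $V(X)=S$, this gives $|S|\le 3\cdot 3=9$ directly.
\item Alternatively, perfection gives $\chi(X)=\omega(X)\le 3$. Fact~\ref{fact:chromatic} then yields $|S|\le \chi(X)\gp(C_t)\le 3\cdot 3=9$.
\end{itemize}
The second route is more self-contained, since it uses only the definition of a perfect graph ($\chi=\omega$ on induced subgraphs) together with the earlier fact.

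\textbf{Where care is needed.} There is no real obstacle. The only point to check is that Fact~\ref{fact:chromatic} is stated for $G\in\{P_s,C_s\}$ with $H$ arbitrary connected, so it applies here with $H=C_t$. One should also confirm that $\gp(C_3)=3$, so that the hypotheses $s,t\ne 4$ are exactly what make both bounds equal to $3$.
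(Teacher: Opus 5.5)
Your proof is correct and matches the paper's: cliques of $X$ are $x$-isometric and independent sets are $y$-isometric, so Fact~\ref{fact:iso-bound} gives both bounds, and perfection yields $\chi(X)=\omega(X)\le 3$, whence Fact~\ref{fact:chromatic} gives $|S|\le 9$. Your preferred first route via $|V(X)|\le\alpha(X)\,\omega(X)$ is the same argument in disguise and does not need the deep direction of Lov\'asz's theorem, since for a perfect graph the inequality follows at once from $\chi(X)=\omega(X)$ and the fact that each color class is independent.
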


\begin{proof}
A clique of $X$ is an $x$-isometric set, so
$\omega(X)\le \gp(C_s)=3$. An independent set of $X$ is $y$-isometric, so
$\alpha(X)\le \gp(C_t)=3$ by Fact~\ref{fact:iso-bound}. If $X$ is perfect,
then $\chi(X)=\omega(X)\le 3$, and Fact~\ref{fact:chromatic} gives
$|S|\le 3\gp(C_t)=9$.
\end{proof}

\begin{remark}
More generally, Fact~\ref{fact:chromatic} gives $|S|\le 9$ whenever
$\chi(X)\le 3$. Perfection is sufficient but not necessary. For example, in
$C_5\stp C_5$, the general position set
$S=\{(0,0),(1,2),(2,4),(3,1),(4,3)\}$ has all pairwise distances equal to
$2$, while $X(S)$ is an induced $C_5$ and is therefore imperfect. Nevertheless,
$\gp(C_5\stp C_5)=9$. Conversely, since $\alpha(X)\le 3$, every general
position set with at least $10$ vertices has $\chi(X)\ge 4$.
\end{remark}

We next show that, for cycles of length at least $7$, some strong products
$C_s\stp C_t$ have general position number greater than $9$. Consequently, the
question posed in \cite{KlavzarY} has a negative answer.

\begin{figure}[ht] 
\centering
\begin{subfigure}[t]{0.47\textwidth}
\centering
\begin{tikzpicture}[scale=0.62]
    \draw[step=1, gray!45, thin] (0,0) grid (6,6);

    \foreach \x in {0,...,5} {
    \foreach \y in {0,...,5} {
      % Draws diagonal lines from corner to corner of each 1cm x 1cm block
      \draw[gray, thin] (\x,\y) -- (\x+1,\y+1);
      \draw[gray, thin] (\x,\y+1) -- (\x+1,\y);
    }
  }

    \foreach \x/\y in {
        0/0,0/1,1/4,2/0,2/2,
        3/5,4/1,4/3,5/6,6/4
    }
        \fill (\x,\y) circle (2.8pt);
\end{tikzpicture}
\caption{$C_7 \stp C_7$}
\end{subfigure}
\hfill
% C7 x C9
\begin{subfigure}[t]{0.47\textwidth}
\centering
\begin{tikzpicture}[scale=0.62]
    \draw[step=1, gray!45, thin] (0,0) grid (6,8);

    \foreach \x in {0,...,5} {
    \foreach \y in {0,...,7} {
      % Draws diagonal lines from corner to corner of each 1cm x 1cm block
      \draw[gray, thin] (\x,\y) -- (\x+1,\y+1);
      \draw[gray, thin] (\x,\y+1) -- (\x+1,\y);
    }
  }

    \foreach \x/\y in {
        0/0,0/2,1/5,2/1,2/8,
        3/4,4/0,4/7,5/3,6/6
    }
        \fill (\x,\y) circle (2.8pt);
\end{tikzpicture}
\caption{$C_7 \stp C_9$}
\end{subfigure}

\vspace{0.8cm}

% C9 x C9
\begin{subfigure}[t]{0.47\textwidth}
\centering
\begin{tikzpicture}[scale=0.55]
    \draw[step=1, gray!45, thin] (0,0) grid (8,8);

    \foreach \x in {0,...,7} {
    \foreach \y in {0,...,7} {
      % Draws diagonal lines from corner to corner of each 1cm x 1cm block
      \draw[gray, thin] (\x,\y) -- (\x+1,\y+1);
      \draw[gray, thin] (\x,\y+1) -- (\x+1,\y);
    }
  }

    \foreach \x/\y in {
        0/0,0/1,1/5,2/0,2/2,
        4/6,5/1,5/4,6/7,7/5
    }
        \fill (\x,\y) circle (2.8pt);
\end{tikzpicture}
\caption{$C_9 \stp C_9$}
\end{subfigure}
\hfill
% C7 x C11
\begin{subfigure}[t]{0.47\textwidth}
\centering
\begin{tikzpicture}[scale=0.55]
    \draw[step=1, gray!45, thin] (0,0) grid (6,10);

    \foreach \x in {0,...,5} {
    \foreach \y in {0,...,9} {
      % Draws diagonal lines from corner to corner of each 1cm x 1cm block
      \draw[gray, thin] (\x,\y) -- (\x+1,\y+1);
      \draw[gray, thin] (\x,\y+1) -- (\x+1,\y);
    }
  }

    \foreach \x/\y in {
        0/0,0/4,1/0,1/7,2/3,
        3/6,4/2,4/9,5/5,6/8
    }
        \fill (\x,\y) circle (2.8pt);
\end{tikzpicture}
\caption{$C_7 \stp C_{11}$} 
\end{subfigure}

\caption{General position sets of cardinality $10$ in some strong products of cycles.}
\label{fig:counterexamples}
\end{figure}

\begin{proposition}\label{prop:counterexamples}
Up to interchanging $s$ and $t$, the following exact values hold:
\[
\gp(C_s\stp C_t)=
\begin{cases}
10, & (s,t)\in\{(7,7),(7,9),(7,11),(9,9),(10,10)\},\\
11, & s=11\text{ and }t\in\{11,13,15,17,19,21,22\}.
\end{cases}
\]
\end{proposition}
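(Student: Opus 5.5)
Each value is proved by matching an upper bound with an explicit construction. The upper bounds come from Proposition~\ref{prop:sup}, that is, from Theorem~\ref{thm:averaging} with $\gp(C_t)=3$. For $(s,t)\in\{(7,7),(7,9),(7,11),(9,9),(10,10)\}$, at least one factor lies in $\{7,9,10\}$ and neither equals $8$, so $\gp(C_s\stp C_t)\le 10$. For $s=11$ and $t\in\{11,13,15,17,19,21,22\}$, neither factor is $8$ and neither lies in $\{7,9,10,12,14,16,18,20\}$, so the bound is $11$. It remains to exhibit general position sets of sizes $10$ and $11$.

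For the lower bounds I will give explicit sets. Figure~\ref{fig:counterexamples} already lists $10$-vertex sets for $C_7\stp C_7$, $C_7\stp C_9$, $C_9\stp C_9$ and $C_7\stp C_{11}$. For $C_{10}\stp C_{10}$ and for the family $C_{11}\stp C_t$ I will search for sets with a rotational structure, for instance orbits of a map $(x,y)\mapsto(x+a,y+b)$. A rotational orbit keeps the list of pairwise distance vectors short, which makes the case analysis in the next step small. For $C_{11}\stp C_t$ with $t$ odd I would try $11$ points $(i,\,c\, i \bmod t)$-like placements adjusted so that every pair has $d_x\ne d_y$ or comparable distances. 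For $t=22$ I would use a separate ad hoc set. Realistically, each set is found by computer and then presented in a table.

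To verify that a listed set $S$ is in general position, I will not check all triples blindly. Instead I use Proposition~\ref{interval}. For each pair $u,v$, say with $d_x(u,v)\ge d_y(u,v)$, a third vertex $z$ lies in $I(u,v)$ only if $z_x\in I_{C_s}(u_x,v_x)$ and the two sub-pairs $uz$ and $zv$ are also $x$-dominated (or the symmetric conditions hold with the roles of $x$ and $y$ swapped). So one tabulates, for each pair, the pair distances $(d_x,d_y)$ and the set of candidate vertices $z$. The symmetry of the construction then reduces this to checking a few orbit representatives. This verification is routine but tedious, and for a written proof I would present the coordinates together with a brief machine-checkable justification.

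The main obstacle is finding the $11$-element sets for the whole list $t\in\{11,13,\dots,21,22\}$. The upper bound $11$ is only barely attainable there, since the averaging bound is $\lfloor 66/6\rfloor=11$ exactly when $s=11$. Equality in Theorem~\ref{thm:averaging} forces every arc window $A_i\times V(C_t)$ of $6$ consecutive columns to contain exactly $2\gp(C_t)=6$ points of $S$. I would use this as a design constraint: it forces exactly one point in each column $x\in[11]_0$, so $S$ is the graph of a function $f\colon[11]_0\to[t]_0$. I would then search for a function $f$ of the form $f(x)=cx \bmod t$, with small corrections where needed, choosing $c$ so that columns at distance $k\le5$ receive $y$-distances avoiding the degenerate equalities that Proposition~\ref{interval} forbids. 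If a uniform formula fails for some values of $t$, I fall back to listing individual computer-found sets for those $t$.
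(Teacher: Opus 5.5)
Your upper-bound half matches the paper exactly: Proposition~\ref{prop:sup} gives $10$ for the five pairs having a factor in $\{7,9,10\}$, and $11$ for $s=11$, $t\in\{11,13,\dots,21,22\}$. Your remark about $s=11$ is also correct. Equality in Theorem~\ref{thm:averaging} forces six points in every window, so the column counts satisfy $c_{i+6}=c_i$; since $\gcd(6,11)=1$, every column contains exactly one point. All of the paper's $11$-point sets have this shape, but this is only a design heuristic.

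The genuine gap is the lower bound. Apart from the four sets taken from Figure~\ref{fig:counterexamples}, you never exhibit a set of size $10$ in $C_{10}\stp C_{10}$, nor a set of size $11$ in any of the seven products $C_{11}\stp C_t$. Instead you describe a search (``I will search'', ``if a uniform formula fails I fall back to computer-found sets''), and that does not show such sets exist. The upper bounds are a one-line consequence of Proposition~\ref{prop:sup}, so these existence claims are the real content of the proposition. As written, your argument establishes only the four figure cases.

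The missing ingredient is simply the explicit sets. For two cases your rotational-orbit idea works with no search at all. Take $\{(i,-3i\bmod 10): i\in[10]_0\}$ in $C_{10}\stp C_{10}$ and $\{(i,3i\bmod 11): i\in[11]_0\}$ in $C_{11}\stp C_{11}$; these are exactly the paper's sets. Pairs whose columns are $k=1,2,3,4,5$ apart have product distances $3,4,3,4,5$ and $3,5,3,4,5$, respectively. So every pairwise distance lies in $\{3,4,5\}$, while any sum of two distances is at least $6$. General position is then immediate, and the Proposition~\ref{interval} bookkeeping you planned is unnecessary.

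For $t\in\{13,15,17,19,21,22\}$ no such shortcut is available. The paper simply lists eleven-point sets $S_t$, one point per column and close to linear, and checks them directly with Fact~\ref{distances}. For instance, $S_{13}$ is given by $y_i=\lceil 91i/11\rceil\bmod 13$ and $S_{22}$ by $y_i=8i\bmod 22$. Your proof needs concrete sets of this kind, with their verification, to be complete.
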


\begin{proof}
The four general position sets of cardinality $10$ displayed in
Figure~\ref{fig:counterexamples} establish the lower bounds for
$(s,t)\in\{(7,7),(7,9),(7,11),(9,9)\}$. A general position set of cardinality
$10$ in $C_{10}\stp C_{10}$ is
\[
\{(i,-3i\bmod 10):i\in[10]_0\}.
\]
Similarly,
\[
\{(i,3i\bmod 11):i\in[11]_0\}
\]
is a general position set of cardinality $11$ in
$C_{11}\stp C_{11}$.

For the remaining values of $t$, the following sets $S_t$ have cardinality $11$ and
are in general position in $C_{11}\stp C_t$:
\begin{align*}
S_{13}={}&\{(0,0),(1,9),(2,4),(3,12),(4,8),(5,3),\\
          &\hspace{2.1cm}(6,11),(7,6),(8,2),(9,10),(10,5)\},\\[1mm]
S_{15}={}&\{(0,0),(1,5),(2,10),(3,1),(4,6),(5,12),\\
          &\hspace{2.1cm}(6,2),(7,8),(8,13),(9,4),(10,9)\},\\[1mm]
S_{17}={}&\{(0,0),(1,6),(2,12),(3,1),(4,7),(5,13),\\
          &\hspace{2.1cm}(6,3),(7,9),(8,15),(9,4),(10,10)\},\\[1mm]
S_{19}={}&\{(0,0),(1,6),(2,13),(3,1),(4,8),(5,15),\\
          &\hspace{2.1cm}(6,3),(7,10),(8,17),(9,5),(10,12)\},\\[1mm]
S_{21}={}&\{(0,0),(1,7),(2,15),(3,1),(4,9),(5,17),\\
          &\hspace{2.1cm}(6,3),(7,11),(8,19),(9,5),(10,13)\},\\[1mm]
S_{22}={}&\{(0,0),(1,8),(2,16),(3,2),(4,10),(5,18),\\
          &\hspace{2.1cm}(6,4),(7,12),(8,20),(9,6),(10,14)\}.
\end{align*}
A direct check using the distance formula from Fact~\ref{distances} verifies
that each displayed set is in general position. The matching upper bounds are
given by Proposition~\ref{prop:sup}.
\end{proof}

If one cycle is sufficiently longer than the other, then the general position number of their strong product is equal to 9, as we show in the sequel.

\begin{lemma}\label{lem:cluster}
Let $S\subseteq V(C_s)$ and suppose that $d_{C_s}(u,v)\le r$ for all
$u,v\in S$. If $s>3r$, then $S$ is contained in an induced path of length at
most $r$ in $C_s$.
\end{lemma}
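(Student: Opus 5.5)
Assume $S\neq\varnothing$; otherwise there is nothing to prove. The plan is to find a long arc of $C_s$ that avoids $S$ entirely. Once that is done, the complement of the arc is a path, and $S$ sits inside it; it then only remains to check that this path is short enough and induced.

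Fix a vertex $a\in S$. Every $v\in S$ satisfies $d_{C_s}(a,v)\le r$, so $S$ is contained in the closed ball $B=\{a-r,\dots,a+r\}$ (indices modulo $s$). Since $s>3r\ge 2r+1$ whenever $r\ge 1$, $B$ is an arc of $2r+1$ distinct vertices, and its complement is a nonempty arc of $s-2r-1\ge r$ vertices. (The case $r=0$ is trivial, since then $S=\{a\}$.) This alone is not enough, because $B$ can have length $2r$. I will therefore work with the gaps of $S$ instead.

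Let $v_1,\dots,v_m$ be the elements of $S$ in cyclic order, and let $g_i\ge 1$ be the cyclic gap from $v_i$ to $v_{i+1}$, so that $\sum g_i=s$.

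\emph{Key step.} I claim that some gap satisfies $g_j\ge s-r$. Then $S$ lies on the arc from $v_{j+1}$ to $v_j$ of length $s-g_j\le r$.

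\emph{Argument for the key step.} Suppose instead that every gap is less than $s-r$. Take $v_1$ and consider the element of $S$ farthest from it along the positive direction, counting only positions within distance $r$. Since all elements lie within distance $r$ of $v_1$, each sits at positive offset in $[0,r]$ or in $[s-r,s-1]$. Let $P$ be the set of elements at offsets in $[0,r]$ and $N$ the set at offsets in $[s-r,s-1]$.

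If $N=\varnothing$, the gap from the largest offset back to $v_1$ is at least $s-r$, which is exactly the claim.

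Otherwise pick $p\in P$ with maximal offset $\alpha\le r$ and $q\in N$ with offset $s-\beta$, where $\beta\le r$. Then the cyclic distance between $p$ and $q$ is $\min\{\alpha+\beta,\,s-\alpha-\beta\}$. Since $s>3r\ge 2r+\ldots$, we get $s-\alpha-\beta>r$. So $d(p,q)\le r$ forces $\alpha+\beta\le r$. Hence all of $S$ lies within the arc from offset $-\beta$ to offset $\alpha$, of length $\alpha+\beta\le r$. Its complementary gap is at least $s-r$, contradicting the assumption. This step, ruling out $S$ spreading out in both directions, is the main point; it is where $s>3r$ is needed, via $s-\alpha-\beta\ge s-2r>r$.

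Finally, the arc $Q$ from $v_{j+1}$ to $v_j$ has length $\ell\le r<s/3$, so $Q$ is a path of $C_s$. It is induced because $\ell+1<s$, so no chord $v_jv_{j+1}$ closes it unless the arc has only two endpoints, and in that case $\ell=1$ and it is an edge. It is also isometric, since $\ell\le s/2$. This completes the plan.
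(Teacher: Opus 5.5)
Your argument is correct and is essentially the paper's proof: fixing $v_1$, splitting $S$ by direction, taking the extreme elements at offsets $\alpha$ and $s-\beta$, and using $s-\alpha-\beta\ge s-2r>r$ to force $\alpha+\beta\le r$ is exactly the paper's $r_++r_-\le r$ step, so the gap language and the proof by contradiction around it are only packaging. Two details need fixing: choose $q\in N$ with $\beta$ maximal (otherwise the arc from $-\beta$ to $\alpha$ need not contain $S$), and replace the unfinished ``$s>3r\ge 2r+\ldots$'' by $s-\alpha-\beta\ge s-2r>r$.
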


\begin{proof}
Fix $p\in S$. Since $d_{C_s}(p,u)\le r<s/2$ for every $u\in S$, each vertex
of $S$ lies uniquely either clockwise or counterclockwise from $p$ at distance
at most $r$. Let $r_+$ and $r_-$ be the largest distances occurring in the two
directions, taking the corresponding value to be $0$ if no vertex lies in that
direction. We claim that $r_++r_-\le r$.

Suppose otherwise. The two corresponding extreme vertices have cyclic distance
at most $r$. Since $r_++r_->r$, the shorter route between them would have to be
the complementary arc, and hence
\[
s-(r_++r_-)\le r.
\]
It follows that $r_++r_-\ge s-r>2r$, contradicting
$r_+,r_-\le r$. Thus, $r_++r_-\le r$, and the arc between the two extreme
vertices that contains $p$ is an induced path of length at most $r$ containing
$S$.
\end{proof}

\begin{theorem}\label{thm:window}
Let $s,t\ge 3$ with $s\ne 4$ and $t\ne 4$. If
$t>15\bigl(\lfloor s/2\rfloor-1\bigr)$, 
then
\[
\gp(C_s\stp C_t)=9.
\]
\end{theorem}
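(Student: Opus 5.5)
The lower bound $\gp(C_s\stp C_t)\ge 9$ comes from Theorem~\ref{th:lower}, since $\gp(C_s)=\gp(C_t)=3$. For the upper bound, fix a general position set $S$, put $X=X(S)$ with $C_s$ as the first factor, and write $h=\lfloor s/2\rfloor$. By Fact~\ref{fact:chromatic} it suffices to prove $\chi(X)\le 3$. Since $\chi(X)$ is the maximum over the components of $X$, I would work component by component.

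\textbf{Step 1 (components have small $y$-spread).} If $u\sim_X v$, then $d_y(u,v)<d_x(u,v)\le h$, so $d_y(u,v)\le h-1$. Hence a connected set of $k$ vertices of $X$ has pairwise $y$-distances at most $(k-1)(h-1)$. The hypothesis $t>15(h-1)=3\cdot 5(h-1)$ is exactly what Lemma~\ref{lem:cluster} needs with $r=5(h-1)$. So for any connected $6$-vertex set $B\subseteq S$ in $X$, the $y$-projections of $B$ lie on an induced path $P$ of $C_t$ of length at most $5(h-1)<t/2$. That path is isometric in $C_t$, so by Fact~\ref{distances} the set $V(C_s)\times V(P)$ induces an isometric copy of $C_s\stp P_m$. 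Therefore $B$ is a general position set there.

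\textbf{Step 2 (components have at most $6$ vertices).} I would bootstrap Step~1 to show that no component has $7$ vertices. Suppose a component contains a connected $7$-set $B'$. Pick two connected $6$-subsets of $B'$ that overlap in $5$ vertices. Their $y$-projections lie in two arcs of length at most $5(h-1)$ sharing a vertex, so all of $B'$ lies in an arc shorter than $t/2$. That arc is again isometric. Then $B'$ is a general position set of size $7$ in an isometric $C_s\stp P_m$. This contradicts Theorem~\ref{thm:path}, which gives $\gp(P_m\stp C_s)=2\gp(C_s)=6$. (If the overlap arc estimate is too weak, I would refine it using that the seventh vertex is within $y$-distance $h-1$ of some vertex of the first $6$-set, keeping the total span below $t/2$ and applying Lemma~\ref{lem:cluster}-type reasoning directly.) So every component has at most $6$ vertices, and by Step~1 it lives in a strip where Theorem~\ref{thm:path} applies.

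\textbf{Step 3 (each component is $3$-colorable).} By Lemma~\ref{lem:perfect}, $\omega(X)\le 3$. A $K_4$-free graph on at most $6$ vertices with chromatic number $4$ must contain the odd wheel $W_5$, that is, an induced $C_5$ together with a hub adjacent to all five of its vertices. So it remains to exclude a $6$-vertex component that is a $W_5$. I expect this to be the main obstacle.

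My first attempt would use the path structure of the strip. Swap factors and view the component inside $P_m\stp C_s$. By Lemma~\ref{lem:bipartite}, the graph $X'$ (defined there with the path as first factor) is bipartite, and its colour classes are $x'$-isometric of size at most $\gp(C_s)=3$ by Fact~\ref{fact:iso-bound}. So the six vertices split into two triples. Within each triple the cycle distance dominates the path distance, so each triple is a candidate clique of $X$. I would then show that an edge of $X$ between the two triples forces, via Proposition~\ref{interval} and the twin structure of Lemma~\ref{lem:short}, a configuration incompatible with the $5$-cycle of the wheel. Concretely, the induced $C_5$ would need to alternate between the triples an odd number of times, which clashes with the bipartition.

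If this succeeds, every component is $3$-colorable, so $\chi(X)\le 3$, and Fact~\ref{fact:chromatic} gives $|S|\le 9$.
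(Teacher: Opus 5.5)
Your Steps 1 and 2 are sound if you use the fallback you mention in Step 2. The overlap estimate only places $B'$ in an arc of length $10(h-1)$, which is below $t/2$ only when $t>20(h-1)$. Instead, remove a non-cut vertex $w$ of $X[B']$ and add $w_y$ to the arc of the remaining connected $6$-set. This gives an arc of length at most $6(h-1)\le t/2$, which suffices. The paper reaches the same conclusion more directly. A component of diameter at least $6$ would contain four pairwise nonadjacent vertices on a shortest path, contradicting $\alpha(X)\le 3$. Hence all $y$-distances inside a component are at most $5(h-1)$, and Lemma~\ref{lem:cluster} applies to the whole component.

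The genuine gap is Step 3. Your route needs $\chi(X)\le 3$, so you must exclude a $W_5$ component, and you leave this unproved. The mechanism you sketch does not work. The two triples are colour classes of $X'$, whose edges are the pairs with $d_y>d_x$. Edges of $X$ are the pairs with $d_x>d_y$, and they can lie inside a triple as well as between triples; pairs with $d_x=d_y$ are edges of neither graph. So nothing forces the induced $C_5$ of $X$ to alternate between the triples, and neither Proposition~\ref{interval} nor Lemma~\ref{lem:short} supplies such a constraint.

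The step can be closed with your own ingredients. The hub of a $W_5$ is $X$-adjacent to all five rim vertices, so it has no $X'$-edges. A proper $2$-colouring of $X'$ restricted to the rim has a class with at least three vertices. Adding the hub to that class gives an $X'$-independent, i.e.\ $x$-isometric, set of size $4$. This contradicts Fact~\ref{fact:iso-bound}, since $\gp(C_s)=3$.

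The paper avoids colouring altogether. It takes $|S|=10$ and uses only $\alpha(X)\le 3$. A $K_4$-free component on $q\le 6$ vertices has independence number at least $\lceil q/3\rceil$; for $4\le q\le 6$ this is just non-completeness. Since independence numbers add over components, $\alpha(X)\ge\lceil 10/3\rceil=4$, a contradiction. This bypasses both the $W_5$ analysis and the fact, which you would also need to cite, that every $4$-chromatic $K_4$-free graph on at most six vertices contains $W_5$.
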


\begin{proof}
The lower bound follows from Theorem~\ref{th:lower}, because
$\gp(C_s)=\gp(C_t)=3$. Suppose, for a contradiction, that there exists a
general position set with at least $10$ vertices, and let $S$ be a
$10$-vertex subset of it. Set
\[
r=\lfloor s/2\rfloor-1
\]
and let $X=X(S)$. Every edge of $X$ satisfies $d_y\le r$.

Every component of $X$ has diameter at most $5$. Indeed, a shortest path with
six edges is isometric, and its four alternating vertices form an independent
set, contradicting $\alpha(X)\le 3$ from Lemma~\ref{lem:perfect}. Consequently,
the $y$-projections of the vertices in any component have pairwise distance at
most $5r$.

Since $t>15r$, Lemma~\ref{lem:cluster} implies that the $y$-projections of each
component $C$ lie on an induced isometric path in $C_t$. If necessary, extend this
path by one vertex, so that it has at least two vertices. The vertices of the
component then lie in an isometric subgraph of $C_s\stp C_t$ isomorphic to
$C_s\stp P_m$ for some $m\ge 2$. By commutativity of the strong product and
Theorem~\ref{thm:path}, $C$ has at most
\[
2\gp(C_s)=6
\]
vertices.

By Lemma~\ref{lem:perfect}, $X$ is $K_4$-free. A $K_4$-free graph on
$q\le 6$ vertices has independence number at least $\lceil q/3\rceil$.
Independence numbers add over components, so
\[
\alpha(X)\ge \sum_C\left\lceil\frac{|V(C)|}{3}\right\rceil
\ge \left\lceil\frac{10}{3}\right\rceil=4,
\]
again contradicting Lemma~\ref{lem:perfect}. Therefore,
$\gp(C_s\stp C_t)\le 9$.
\end{proof}

\begin{theorem}\label{thm:row7}
If $t\ge 3$ and $t\ne 4$, then
\[
\gp(C_7\stp C_t)=
\begin{cases}
10, & t\in\{7,9,11\},\\
9, & \text{otherwise}.
\end{cases}
\]
\end{theorem}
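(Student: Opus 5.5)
We split the range of $t$ into three parts: small $t$, large $t$, and a finite middle range.

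\textbf{Small $t$ and the values $10$.} For $t\le 6$ with $t\ne 4$, Proposition~\ref{cor:upper} gives $\gp(C_7\stp C_t)=9$ directly. For $t\in\{7,9,11\}$, the value $10$ is exactly Proposition~\ref{prop:counterexamples}. For $t=8$, Corollary~\ref{cor:row8} gives the value $9$. For $t\in\{10,12,14,16,18,20\}$, Proposition~\ref{prop:sup} gives the upper bound $10$ (the case $7\in\{s,t\}$), and the lower bound $9$ always holds by Theorem~\ref{th:lower}.

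\textbf{Large $t$.} Theorem~\ref{thm:window} with $s=7$ has threshold $15(\lfloor 7/2\rfloor-1)=30$. Hence $\gp(C_7\stp C_t)=9$ for all $t\ge 31$.

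\textbf{The finite middle range.} It remains to show that no general position set of size $10$ exists in $C_7\stp C_t$ for $t\in\{10,12,13,\dots,30\}$. There are two ways to attack this.

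The first is to tighten the argument of Theorem~\ref{thm:window} specifically for $s=7$, where $r=2$. Every edge of $X$ has $d_x\in\{2,3\}$ and $d_y\le d_x-1\le 2$. Short edges are those with $d_x=1$ and $d_y=0$; by Lemma~\ref{lem:short} they come in twin pairs, and these edges have $d_y=0$. The diameter bound gives components whose $y$-spread is at most $5r=10$, so Lemma~\ref{lem:cluster} only requires $t>30$. I would sharpen the diameter bound using $\alpha(X)\le 3$ and $\omega(X)\le 3$: a $10$-vertex graph with $\alpha\le3$ and $\omega\le3$ satisfies $R(4,4)=18>10$, so such graphs exist, and we need structure rather than Ramsey alone.

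The more promising route is a direct structural analysis of $C_7$-coordinates. Project $S$ onto the $x$-coordinate. For each residue $i$, the points with $x$ in the window $A_i=\{i,\dots,i+3\}$ form a general position set of $P_4\stp C_t$, so there are at most $6$ of them. Averaging over the seven windows gives $4|S|\le 42$, so $|S|\le 10$; this bound is what we must beat. Equality $|S|=10$ forces the window counts to sum to $40$, each count at most $6$. I would then exploit the fact that the $y$-projections of pairs at $x$-distance $\le 3$ constrain $y$-differences. Walking around the $7$ columns, one aims to show that the $y$-coordinates wind around $C_t$ with steps bounded by about $3$ per column. This forces $t$ to satisfy a congruence-type condition that picks out $t\in\{7,9,11\}$ among $t\le 30$.

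\textbf{Main obstacle and fallback.} The main obstacle is precisely this middle range $12\le t\le 30$, together with $t=10$. A clean conceptual argument there is hard. If the winding argument fails, I would fall back on a computer search, presented as an exhaustive verification that no $10$-element general position set exists in $C_7\stp C_t$ for these finitely many $t$. I expect the paper to do likewise, possibly after using symmetry to fix a point at $(0,0)$.
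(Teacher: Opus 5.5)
Your proposal is correct and follows the paper's proof essentially verbatim. It uses the same split: $t\in\{3,5,6\}$ via Proposition~\ref{cor:upper}, $t=8$ via Corollary~\ref{cor:row8}, $t\in\{7,9,11\}$ via Proposition~\ref{prop:counterexamples}, $t>30$ via Theorem~\ref{thm:window} with threshold $15(\lfloor 7/2\rfloor-1)=30$, and an exhaustive computer search for $t\in\{10,12,13,\ldots,30\}$. Your sketched winding argument is only heuristic (and its claim that every edge of $X$ has $d_x\in\{2,3\}$ overlooks short edges), so, as in the paper, the middle range rests entirely on the computational verification.
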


\begin{proof}
For $t>30$, the value $9$ follows from Theorem~\ref{thm:window}. The cases
$t\in\{3,5,6\}$ follow from Proposition~\ref{cor:upper}, and the case $t=8$
follows from Proposition~\ref{prop:sup} together with Theorem~\ref{th:lower}.
For $t\in\{7,9,11\}$, Proposition~\ref{prop:counterexamples} gives the value
$10$. The remaining cases
\[
t\in\{10,12,13,\ldots,30\}
\]
were verified by exhaustive computer search.
\end{proof}

We next show that the upper bound in Proposition~\ref{prop:sup} is attained for
infinitely many pairs of cycles.

\begin{lemma}\label{lem:scaling}
If $k\ge 1$, then
\[
\gp(C_{ks}\stp C_{kt})\ge \gp(C_s\stp C_t).
\]
\end{lemma}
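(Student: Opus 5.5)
We will prove the inequality with the scaling map $\varphi\colon V(C_s\stp C_t)\to V(C_{ks}\stp C_{kt})$, $\varphi(a,b)=(ka,kb)$. Take a maximum general position set $S$ of $C_s\stp C_t$ and show that $\varphi(S)$ is a general position set of $C_{ks}\stp C_{kt}$. Since $\varphi$ is injective, this gives $\gp(C_{ks}\stp C_{kt})\ge|\varphi(S)|=|S|=\gp(C_s\stp C_t)$.

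The first step is a distance identity for cycles: for $a,b\in[s]_0$ we claim $|ka-kb|_{ks}=k\,|a-b|_s$. Both $ka$ and $kb$ lie in $[ks]_0$, and $|ka-kb|=k|a-b|$. Hence
\[
\min\{k|a-b|,\,ks-k|a-b|\}=k\min\{|a-b|,\,s-|a-b|\}.
\]
The same identity holds in the second coordinate with $t$ in place of $s$.

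By Fact~\ref{distances}, it then follows that
\[
d_{C_{ks}\stp C_{kt}}(\varphi(u),\varphi(v))=\max\{k\,d_x(u,v),\,k\,d_y(u,v)\}=k\,d_{C_s\stp C_t}(u,v)
\]
for all $u,v$. Thus $\varphi$ multiplies every distance by exactly $k$.

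Now suppose three distinct vertices $\varphi(u),\varphi(v),\varphi(z)$ of $\varphi(S)$ satisfy
\[
d(\varphi(u),\varphi(v))=d(\varphi(u),\varphi(z))+d(\varphi(z),\varphi(v)).
\]
Dividing by $k$ gives $d(u,v)=d(u,z)+d(z,v)$, that is, $z\in I(u,v)$ in $C_s\stp C_t$. This contradicts the choice of $S$ as a general position set, so $\varphi(S)$ is in general position.

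There is no real obstacle. The only point needing care is that the cyclic distance scales exactly, including the wrap-around term $ks-k|a-b|=k(s-|a-b|)$, and the display above handles this directly.
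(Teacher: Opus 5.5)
Your proof is correct and takes the same approach as the paper: the scaling map $(x,y)\mapsto(kx,ky)$ multiplies every cyclic distance exactly by $k$, and hence, by Fact~\ref{distances}, every product distance, so the image of a general position set is again in general position. Your explicit checks of the wrap-around term $ks-k|a-b|=k(s-|a-b|)$ and of injectivity fill in details that the paper leaves implicit.
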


\begin{proof}
The map $(x,y)\mapsto(kx,ky)$ satisfies
\[
d_{C_{ks}}(kx,kx')=k\,d_{C_s}(x,x')
\]
and analogously in the second coordinate. By Fact~\ref{distances}, it
multiplies all distances in the strong product by $k$. Hence, it preserves all
strict triangle inequalities, and the image of a general position set in
$C_s\stp C_t$ is a general position set in $C_{ks}\stp C_{kt}$.
\end{proof}

\begin{corollary}\label{cor:scaled11}
If $k\ge 1$, then
\[
\gp(C_{11k}\stp C_{11k})=11.
\]
\end{corollary}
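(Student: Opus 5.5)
The proof splits into a lower bound and a matching upper bound.

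For the lower bound, Proposition~\ref{prop:counterexamples} gives $\gp(C_{11}\stp C_{11})\ge 11$. An explicit witness is the set $\{(i,3i\bmod 11):i\in[11]_0\}$. Lemma~\ref{lem:scaling} with $s=t=11$ then yields $\gp(C_{11k}\stp C_{11k})\ge \gp(C_{11}\stp C_{11})=11$ for every $k\ge 1$.

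For the upper bound, Theorem~\ref{thm:averaging} applies with $H=C_{11k}$ and $\gp(H)=3$. It gives
\[
\gp(C_{11k}\stp C_{11k})\le \left\lfloor \frac{6\cdot 11k}{\lfloor 11k/2\rfloor+1}\right\rfloor .
\]
For $k=1$ this is $\lfloor 66/6\rfloor=11$. For $k\ge 2$ we have $11k\ge 22\ge 7$, and $11k\notin\{8,7,9,10,12,14,16,18,20\}$, so Proposition~\ref{prop:sup} gives the bound $11$ directly. If one prefers to check the formula by hand, the even case $r=2m$ gives $12-12/(m+1)<12$ and the odd case gives $12-6/(m+1)<12$. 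Both floors equal $11$ once $m$ is large enough: $m\ge 11$ in the even case and $m\ge 6$ in the odd case. Since $11k\ge 11$, the odd case ($m\ge 5$, where $k=1$ was checked directly) and the even case ($m\ge 11$) are both covered.

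Combining the two bounds gives equality. The only point needing care is the arithmetic that places $11k$ in the ``otherwise'' branch of Proposition~\ref{prop:sup}, or equivalently the case $k=1$ with $s=11<7$ excluded from nothing. Since $11\ge 7$, Proposition~\ref{prop:sup} applies there as well, because $11$ is not in the listed set.
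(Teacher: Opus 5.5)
Your proof is correct and follows the paper's route: the lower bound comes from applying Lemma~\ref{lem:scaling} to the $11$-vertex general position set of $C_{11}\stp C_{11}$ from Proposition~\ref{prop:counterexamples}, and the upper bound comes from Proposition~\ref{prop:sup}, which is Theorem~\ref{thm:averaging}. The only thing to fix is the garbled closing phrase (``$s=11<7$ excluded from nothing''), since, as you then say, $11\ge 7$ and $11$ is not in the exceptional list, so Proposition~\ref{prop:sup} already covers $k=1$.
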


\begin{proof}
The lower bound follows from Lemma~\ref{lem:scaling} and
Proposition~\ref{prop:counterexamples}. The upper bound follows from
Proposition~\ref{prop:sup}.
\end{proof}

The upper bound can also be attained when the two cycles have different lengths.

\begin{lemma}\label{lem:thick}
Let $S$ be a general position set of $C_s\stp C_t$. If $k\ge 1$ and
$a,b\ge 0$ satisfy $\max\{a,b\}<k$, then
\[
S'=\{(ku_x,ku_y):(u_x,u_y)\in S\}
\]
is a general position set of $C_{ks+a}\stp C_{kt+b}$.
\end{lemma}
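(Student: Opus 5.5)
The plan is to show that the map $(x,y)\mapsto(kx,ky)$ multiplies every distance by $k$ up to a bounded additive error, and that this error is too small to create a new geodesic triple. Throughout, distances in $C_s\stp C_t$ are computed with Fact~\ref{distances}.

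\emph{One coordinate.} For $x,x'\in[s]_0$ put $D=|x-x'|$, so $d_{C_s}(x,x')=\min\{D,s-D\}$. Since $kx,kx'\in[ks+a]_0$,
\[
d_{C_{ks+a}}(kx,kx')=\min\{kD,\;k(s-D)+a\}.
\]
There are two cases.
\begin{itemize}
\item If $D\le s-D$, then $kD\le k(s-D)+a$, so the new distance is $kD=k\,d_{C_s}(x,x')$.
\item If $D>s-D$, then $s-D\le D-1$. Hence $k(s-D)+a<k(s-D)+k\le kD$, so the new distance is $k\,d_{C_s}(x,x')+a$.
\end{itemize}
In both cases the new distance equals $k\,d_{C_s}(x,x')+e$ with $0\le e\le a<k$. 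The same argument with $b<k$ handles the second coordinate.

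\emph{Product distances.} Write $d'$ for the distance in $C_{ks+a}\stp C_{kt+b}$ and $u'=(ku_x,ku_y)$. Suppose $d_x(u,v)>d_y(u,v)$. Then
\[
k\,d_x(u,v)+e_x\ge k\,d_y(u,v)+k>k\,d_y(u,v)+e_y,
\]
so the maximum is attained in the $x$-coordinate. If $d_x(u,v)=d_y(u,v)$, the two terms differ only in their errors. In either case
\[
d'(u',v')=k\,d(u,v)+e(u,v),\qquad 0\le e(u,v)\le k-1 .
\]
In particular the map is injective on $S$, since distinct vertices have $d(u,v)\ge 1$ and hence $d'(u',v')\ge k>0$.

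\emph{General position.} Let $u,v,z\in S$ be pairwise distinct. Distances are integers, the triangle inequality holds, and $S$ is in general position, so
\[
d(u,v)\le d(u,z)+d(z,v)-1 .
\]
Therefore
\[
d'(u',v')\le k\,d(u,v)+k-1\le k\bigl(d(u,z)+d(z,v)\bigr)-1<d'(u',z')+d'(z',v'),
\]
where the last step uses $d'\ge k\,d$ for each pair. So $z'\notin I(u',v')$, and $S'$ is in general position.

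The only delicate point is the case analysis in the first step. The additive error $a$ (or $b$) appears only when the short way around the cycle passes through the $a$ extra vertices, and the strict bound $a<k$ is exactly what keeps this error below $k$. Everything after that is a routine integrality argument.
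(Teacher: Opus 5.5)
Your proof is correct and follows essentially the same route as the paper. Both show that each coordinate distance scales by $k$ up to an additive error of at most $\max\{a,b\}<k$, transfer this to the product via Fact~\ref{distances}, and then use the integrality of the positive defect in the triangle inequality to rule out new geodesic triples. Your exact error computation (error $0$ or $a$) and the case split on which coordinate attains the maximum are finer than needed, but they are valid.
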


\begin{proof}
For $x,x'\in V(C_s)$, let $t=|x-x'|$. Then
\begin{align*}
d_{C_{ks+a}}(kx,kx')
&=\min\{kt,k(s-t)+a\},\\
k\,d_{C_s}(x,x')
&=\min\{kt,k(s-t)\}.
\end{align*}
Therefore,
\[
k\,d_{C_s}(x,x')
\le d_{C_{ks+a}}(kx,kx')
\le k\,d_{C_s}(x,x')+a.
\]
The analogous inequality holds for the second coordinate. Let $d$ and $d'$ denote the distance functions in $C_s\stp C_t$ and
$C_{ks+a}\stp C_{kt+b}$, respectively. By Fact~\ref{distances}, for all
$u,v\in S$ and their images $u',v'\in S'$,
\[
k\,d(u,v)\le d'(u',v')\le k\,d(u,v)+M,
\]
where $M=\max\{a,b\}<k$.

Let $u,v,z\in S$ be pairwise distinct. Since $S$ is in general position,
there is an integer $\Delta\ge 1$ such that
\[
d(u,v)+d(v,z)=d(u,z)+\Delta.
\]
Using the preceding bounds, we obtain
\begin{align*}
d'(u',v')+d'(v',z')
&\ge k\bigl(d(u,v)+d(v,z)\bigr)\\
&=k\,d(u,z)+k\Delta\\
&>k\,d(u,z)+M\\
&\ge d'(u',z').
\end{align*}
Thus, $v'$ does not lie on a shortest $u',z'$-path. Since the triple was
arbitrary, $S'$ is in general position.
\end{proof}

\begin{corollary}\label{cor:cone} 
Let  $Q=\{11,13,15,17,19,21,22\}$. If $q\in Q$, $k\ge 1$, and $0\le a,b<k$, then \[ \gp(C_{11k+a}\stp C_{qk+b})=11. \] In particular, \[ \gp(C_s\stp C_s)=11 \qquad\text{for every }s\ge 121. \] 
\end{corollary}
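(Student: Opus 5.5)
The lower bound comes from the two preceding lemmas, and the upper bound from Proposition~\ref{prop:sup}.

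\emph{Lower bound.} By Proposition~\ref{prop:counterexamples}, for each $q\in Q$ there is a general position set $S_q$ of cardinality $11$ in $C_{11}\stp C_q$. For $q=11$ this is $\{(i,3i\bmod 11)\}$, and otherwise it is the displayed $S_q$. Applying Lemma~\ref{lem:thick} with $s=11$, $t=q$ and the given $k$, $a$, $b$ satisfying $\max\{a,b\}<k$, the scaled set $\{(ku_x,ku_y):u\in S_q\}$ is in general position in $C_{11k+a}\stp C_{qk+b}$. The map $(x,y)\mapsto(kx,ky)$ is injective, so this set has $11$ elements. Hence $\gp(C_{11k+a}\stp C_{qk+b})\ge 11$.

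\emph{Upper bound.} Put $s'=11k+a$ and $t'=qk+b$. Proposition~\ref{prop:sup} gives the bound $11$ once both lengths are at least $7$ and neither lies in $\{8,7,9,10,12,14,16,18,20\}$. Since $s'\ge 11$ and $t'\ge 11$, the only exceptions are $12,14,16,18,20$.

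\textbf{This exclusion is the main obstacle.} We need $k=1$ to force $a=b=0$, so that $s'=11$ and $t'=q\in Q$. For $k\ge 2$ we have $s',t'\ge 22$. So no exceptional length occurs, and Proposition~\ref{prop:sup} gives $\le 11$.

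It is cleaner to bypass Proposition~\ref{prop:sup} and use Theorem~\ref{thm:averaging} directly with $H=C_{t'}$ and $\gp(H)=3$. This gives $\lfloor 6r/(\lfloor r/2\rfloor+1)\rfloor<12$ for every length $r$, because $6r<12(\lfloor r/2\rfloor+1)$ always holds. Either way, the upper bound is $11$ and equality follows.

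\emph{The ``in particular''.} Take $q=11$, so the statement concerns $C_{11k+a}\stp C_{11k+b}$. For $s\ge 121$ we must write $s=11k+a$ with $0\le a<k$ and then take $b=a$. Set $k=\lfloor s/11\rfloor$ and $a=s-11k\le 10$. Then $s\ge 121$ gives $k\ge 11>a$, as required. Applying the first part with this $k$ and $a=b$ yields $\gp(C_s\stp C_s)=11$.
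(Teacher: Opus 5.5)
Your proof is correct and follows the paper's argument: the lower bound comes from Lemma~\ref{lem:thick} applied to the sets of Proposition~\ref{prop:counterexamples}, the upper bound from Proposition~\ref{prop:sup}, and the final assertion uses the same choice $k=\lfloor s/11\rfloor$, $a=b=s-11k$. The step you flag as the main obstacle is not needed, because every case of Proposition~\ref{prop:sup} gives a bound of at most $11$ (as your own observation $6r<12(\lfloor r/2\rfloor+1)$ confirms), so checking that no exceptional length occurs only confirms consistency with the lower bound.
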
 
\begin{proof} By Proposition~\ref{prop:counterexamples}, for every $q\in Q$ there exists a general position set of cardinality $11$ in $C_{11}\stp C_q$. Since $0\le a,b<k$, Lemma~\ref{lem:thick} maps this set to a general position set of cardinality $11$ in \[ C_{11k+a}\stp C_{qk+b}. \] Consequently, \[ \gp(C_{11k+a}\stp C_{qk+b})\ge 11. \] 

Since Proposition~\ref{prop:sup} gives \[ \gp(C_{11k+a}\stp C_{qk+b})\le 11, \]  equality follows. For the final assertion, write \[ s=11k+a, \qquad k=\left\lfloor\frac{s}{11}\right\rfloor, \qquad 0\le a\le 10. \] If $s\ge121$, then $k\ge11$, and therefore $a<k$. Taking $q=11$ and $b=a$ in the first part yields \[ \gp(C_s\stp C_s)=11. \] \end{proof}

\section*{Acknowledgments}
This work was supported by the Slovenian Research and Innovation Agency (ARIS) through research core funding P1-0297 and projects J1-70016, J1-70045, and N1-0431.

\end{document}